\documentclass[a4paper,10pt,notitlepage]{article}

\usepackage{amssymb,amsmath,amsfonts,wasysym,hyperref,nameref,graphicx,bbm,ntheorem}

\newtheorem{Theorem}{Theorem}

\newtheorem{Proposition}{Proposition}
\newtheorem{Lemma}{Lemma}

\newenvironment{proof}[1][] {\noindent{\bf Proof#1.} }{\hspace*{\fill}$\square$\medskip\par}

\theorembodyfont{\upshape}

\newtheorem{Definition}{Definition}
\newtheorem{Remark}{Remark}

\newtheorem{Notation}{Notation}

\newcommand{\R}{\mathbb{R}}
\newcommand{\N}{\mathbb{N}}
\newcommand{\iN}{\in\mathbb{N}}
\newcommand{\Z}{\mathbb{Z}}
\newcommand{\E}{\mathbb{E}}
\newcommand{\IP}{\mathbb{P}}

\newcommand{\inftwo}[2]{\inf_{\substack{#1 \\ #2}}} 

\newcommand{\cd}{\mathfrak{d}}
\newcommand{\cT}{\mathcal{T}}
\newcommand{\cP}{\mathcal{P}}
\newcommand{\cK}{\mathcal{K}}
\newcommand{\cM}{\mathcal{M}}

\newcommand{\cU}{\mathcal{U}}
\newcommand{\cO}{\mathcal{O}}
\newcommand{\om}{\omega}
\newcommand{\Om}{\Omega}
\newcommand{\ind}{1\hspace{-0.098cm}\mathrm{l}}
\newcommand{\dd}{\mathrm{d}}
\newcommand{\eps}{\varepsilon}
\newcommand{\vphi}{\varphi}

\title{Constructive quantization: approximation by empirical measures} 
\author{Steffen Dereich\footnote{Philipps-Universit\"at Marburg, Fachbereich Mathematik und Informatik, Hans-Meerwein-Str., D-35032 Marburg, \{dereich,schottstedt\}@mathematik.uni-marburg.de} , Michael Scheutzow\footnote{Technische Universit\"at Berlin, Institut f\"ur Mathematik, 
Stra\ss e des 17. Juni 136, D-10623 Berlin, ms@math.tu-berlin.de}, and Reik Schottstedt\footnotemark[1]} 
\date{August 26, 2011}


\begin{document}

\maketitle

\begin{abstract}
In this article, we study the approximation of a probability measure $\mu$ on $\mathbb{R}^{d}$  by its empirical measure 
$\hat{\mu}_{N}$ interpreted as a random quantization. As error criterion we consider an averaged $p$-th moment  Wasserstein metric. 
In the case where $2p<d$, we establish refined upper and lower bounds for the error, a  \emph{high-resolution formula}. Moreover, 
we provide a universal estimate based on moments, a so-called \emph{Pierce type estimate}.
In particular, 
we show that  quantization by empirical measures is of optimal order under weak assumptions.
\end{abstract}
\medskip

\noindent{\slshape\bfseries Keywords.} Constructive quantization, Wasserstein metric, transportation problem, Zador's theorem, Pierce's lemma, random quantization.
\bigskip

\noindent
{\slshape\bfseries 2000 Mathematics Subject Classification.} Primary 60F25 Secondary 65D32

\section{Introduction}\label{sec1}

Constructive quantization is concerned with the efficient computation of discrete approximations to probability distributions. 
The need for such approximations mainly stems  from two applications: firstly from information theory, where the approximation 
is a discretized version of an original signal which is to be stored on a data storage medium or transmitted via a channel 
(see e.g.\ \cite{Zad66,BuWi82, GG92}); secondly, from numerical integration, where integrals with respect to the original 
measure are replaced by the 
integral with respect to the discrete approximation (see e.g.\ \cite{PPP03}).



In both applications the objective is to find an optimal discrete subset of a metric space $(E,d)$ of cardinality $N$ say, a so-called \emph{codebook}, depending on the 
given probability measure $\mu$ on  $E$. 
 In the first application one further needs fast coding and  decoding schemes that  find for a signal a digital representation of a close element of the codebook or, resp., translate the digital representation back.
Clearly, the best coding scheme would map a signal to a digital representation of a closest neighbour in the codebook. The \emph{quantization number} measures the smallest possible averaged distance 
of a $\mu$-distributed  point to the codebook and hence the performance of the best possible approximate coding of $\mu$  using $N$ approximating points which corresponds to using $\log_2 N$ bits.

During the last decade, quantization attracted much interest mainly due to the second application, see for instance  \cite{PW11} for a recent review on financial applications. 
Here one aims at finding a codebook together with probability weights and the objective is to determine these in such a way that the 
distance between $\mu$ and the discrete probability measure is minimal with respect to some metric (e.g.\ a Wasserstein metric). Typically, the optimal solution of both problems are closely related. The optimal codebook of the first problem is also optimal for the second one and the optimal probability weights are the $\mu$-weights of the corresponding Voronoi cells. 
In particular, the optimal approximation errors are again the  \emph{quantization numbers}. 
A regularly updated list of articles dealing with quantization can be found at \verb?http://www.quantize.maths-fi.com/?.

From a constructive point of view, the two applications differ significantly and our research  is mainly motivated by the second application. For moderate codebook sizes and particular 
probability measures it is  feasible to run optimization algorithms and find approximations that are arbitrarily close to the 
optimum (see e.g.\ \cite{Pag98, PagPrin03}). See also \cite{MRY11} for a recent  constructive approach towards discrete approximation of marginals of stochastic differential equations. 
For large codebook sizes and probability measures that are defined 
implicitly, it is  often not feasible to find close to optimal quantizations in  reasonable time. For instance, large codebooks are necessary when using quantizations for  approximate sampling.



As an alternative approach we analyze the use of the \emph{empirical measure}~$\hat \mu_N$ 
generated by $N$ independent random variables distributed according to the original measure $\mu$. 
As error criterion we consider an averaged $L^p$-Wasserstein metric.
We stress that  in our case the codebook is generated by i.i.d.\ samples and that the weights all have \emph{equal} mass so that 
once the codebook is generated no further processing is needed. The advantage of using the empirical measure as a discrete approximation of $\mu$
is that it is usually easy to generate efficiently even for large $N$. The disadvantage is, of course, that for given $N$, the averaged Wasserstein distance 
between  $\mu$ and $\hat \mu_N$ is larger than that between $\mu$ and the optimal probability measure supported on $N$ points. We will show that in the case 
$E=\R^d$ equipped with some norm (which is the only case we consider in this article), the loss of performance is a multiplicative constant. 
While the empirical measure turns out to be a reasonable approximation that can be computed efficiently, the analysis of its performance is complicated by the fact that 
the problem is \emph{nonlocal} due to the fact that we take equal weights rather than optimal weights as in  \cite{Coh04,Yuk08} (see the following subsection).

A full treatment of  quantization typically  includes the derivation of asymptotic formulas in terms of the density of the absolutely continuous part of $\mu$, a \emph{high resolution formula}. 
Such a formula has been established for optimal quantization under norm-based distortions \cite{DGLP04}, for 
general Orlicz-norm distortions \cite{DerVor11}, and, very recently, also in the dual quantization problem \cite{PagWil10}. 
In this article, we prove a high resolution formula for the empirical measure under an averaged $L^p$-Wasserstein metric. 
Further, a Pierce type result is derived. In particular, we obtain order optimality of the new approach  under weak assumptions.

The article is organised as follows. Section~\ref{sec1}  introduces the basic notation and summarizes the main results. 
Section~\ref{sec_pierce} is devoted to the Pierce type result, see Theorem~\ref{thm_pierce} below. Section~\ref{sec_uni} treats 
the particular case where $\mu$ is the uniform distribution on $[0,1)^d$. It includes a proof of part (i) of Theorem~\ref{thm2} below. 
Finally, the high resolution formula provided by Theorem~\ref{thm2}  is proved in Section~\ref{sec_hrf}.  


\subsection{Notation}

We introduce the relevant notation along an example.
Consider the following problem arising from logistics. There is a demand for a certain economic good on $\mathbb{R}^{2}$ 
modelled by a finite measure $\mu$. The demand shall be accomodated by $N$ service centers that are placed at positions 
$x_{1},\dots,x_{N}\in\mathbb{R}^{2}$ and that have nonnegative capacities $p_1,\dots,p_N$ summing up to $\|\mu\|:=\mu(\R^2)$. 
We associate a given choice of \emph{supporting points} $x_1,\dots,x_N$ and \emph{weights} $p_1,\dots p_N$ with a measure 
$\hat \mu=\sum_{i=1}^N p_i \delta_{x_i}$, where $\delta_x$ denotes the Dirac measure in $x$. 
In order to cover the demand, goods have to be transported from the centers to the customers and we describe a transport 
schedule  by a measure $\xi$ on $\R^2\times \R^2$ such that its first, respectively second, marginal measure is equal to 
$\mu$, respectively $\hat \mu$. The set of admissible  transport schedules (\emph{transports}) is denoted by $\cM(\mu,\hat \mu)$  
and supposing that transporting a unit mass from $y$ to $x$ causes  cost $c(x,y)$, a transport $\xi\in\cM(\mu,\hat \mu)$ 
causes overall cost
$$
\int_{\R^2\times \R^2} c(x,y)\,\dd \xi(x,y).
$$

In  this article, we focus on norm based cost functions. In general, we assume that the demand is a finite measure on $\R^d$ 
and that the cost is of the form
$$
c(x,y)=\|x-y\|^p,
$$
where $p\ge 1$ and $\|\cdot\|$ is a fixed norm on $\R^d$. Given $\mu$ and $\hat \mu$, the minimal cost  is  the $p$th Wasserstein 
metric.

\begin{Definition}[$p$th Wasserstein metric]\label{pth Wasserstein metric}
 The $p$th {\em Wasserstein metric} of two finite measures $\mu$ and $\nu$  on
$\left(\mathbb{R}^{d},\mathcal{B}(\mathbb{R}^{d}) \right)$, which have equal mass, is given by
\[
 \rho_{p}(\mu,\nu)=\inf_{\xi\in \mathcal{M}(\mu,\nu)}\left(\int_{\mathbb{R}^{d}\times\mathbb{R}^{d}} 
\left\| x-y\right\|^{p}\xi\left( \dd x,\dd y\right)\right)^{1/p}
\]
where $\mathcal{M}(\mu,\nu)$ is the set of all finite measures $\rho$ on $\mathbb{R}^{d}\times\mathbb{R}^{d}$ having marginal 
distributions $\mu$ in the first component and $\nu$ in the second component.
\end{Definition}

The Wasserstein metric originates from the \textit{Monge-Kantorovich mass transportation problem}, which was introduced 
by G. Monge in 1781 \cite{Mon81}. Important results about the Wasserstein metric were achieved within the scope of 
\textit{transportation theory}, for instance by Kantorovich \cite{Kan42}, Kantorovich and Rubinstein \cite{KR58},  Wasserstein  
\cite{Was69}, Rachev and  R\"uschendorf \cite{RR98i,RR98ii} and others. 

Note that the Wasserstein metric is homogeneous in $(\mu,\nu)$ so that one can restrict attention to  probability measures. 
In this article, we analyse for a given probability measure $\mu$ on $\R^d$ the quality of the empirical measure as approximation. 
More explicitly, we denote by  $\hat\mu_N$  the (random) empirical measure of $N$ independent $\mu$-distributed random 
variables $X_1,\dots,X_N$, that is 
$$
\hat\mu_N=\frac 1N \sum_{j=1}^N \delta_{X_j},
$$
and, for fixed $p\ge 1$, we analyse the asymptotic behaviour of the so called \emph{random quantization error}
$$
V_{N,p}^\mathrm{rand}(\mu):=\E[\rho_p^p(\mu,\hat\mu_N)]^{1/p},
$$
as $N\in\N$ tends to infinity.

This quantity should be compared with the optimal approximation in the $L^p$-Wasserstein metric supported by $N$ points, that is
\begin{equation}\label{opt}
V_{N,p}^\mathrm{opt}(\mu):= \inf_{\nu} \rho_p(\mu,\nu),
\end{equation}
where the infimum is taken over all probability measures $\nu$ on $\R^d$ that are supported on $N$ points. 
The quantity $V_{N,p}^\mathrm{opt}(\mu)$ is local in the sense that for a given set $\mathcal C\subset \R^d$ of supporting points 
used in an approximation $\nu$, the optimal choice for $\nu$ is $\mu\circ \pi_\mathcal{C}^{-1}$, where $\pi_{\mathcal C}$ denotes a 
projection from $\R^d$ to $\mathcal C$. Hence, the minimisation of the latter quantity reduces to a minimisation over all sets 
$\mathcal C\subset \R^d$ of at most $N$ elements. Furthermore, the minimal error is the so called  $N$th \emph{quantization number}
$$
V_{N,p}^\mathrm{opt}(\mu)= \inf _{\mathcal C} \Bigl(\int  \min_{y\in \mathcal C} \|x-y\|^p \, \mu(\dd x)\Bigr)^{1/p}.
$$

For a measure $\mu$ on $\R^d$ we denote by $\mu=\mu_a+\mu_s$ its 
Lebesgue decomposition with $\mu_a$  denoting the absolutely continuous part with respect to Lebesgue measure $\lambda^d$ 
and $\mu_s$ the singular part. 

Further, we denote the uniform distibution on $[0,1)^d$ by $\cU$ and define
$$
\underline{V}_{N,p}^\mathrm{rand}:=\E\Bigl[ \inftwo{\cU' \in \Lambda}{\xi \in \mathcal{M}(\cU',\hat \cU_N)}\int_{\mathbb{R}^{d}\times\mathbb{R}^{d}} 
\left\| x-y\right\|^{p}\xi\left( \dd x,\dd y\right)\Bigr]^{1/p},
$$
where $\Lambda$ denotes the set of all probability measures $\cU'$ on $[0,1]^d$ which satisfy $\cU' (A) \le \mathcal U(A)$ for each Borel set 
$A \subset (0,1)^d$. Note that the latter quantity allows to have leakage in the boundaries of the support of the uniform measure~$\cU$. Therefore,  $\underline{V}_{N,p}^\mathrm{rand} \le V_{N,p}^\mathrm{rand}(\cU)$. It seems plausible that the ratio of
$\underline{V}_{N,p}^\mathrm{rand}$ and $V_{N,p}^\mathrm{rand}(\cU)$ converges to one as $N \to \infty$. However, this has not been proved yet.

\subsection{Main results}

We will assume throughout the paper that $d\ge 3$. The approximation by empirical measures satisfies a so-called 
{\em Pierce type} estimate.

\begin{Theorem}\label{thm_pierce}
Let $p\in\left[1,\frac{d}{2}\right)$ and $q > \frac{dp}{d-{p}}$. There exists a constant $\kappa_{p,q}^\mathrm{Pierce}$ such 
that for any probability measure $\mu$ on $\R^d$
\begin{align}\label{eq2111-1}
V_{N,p}^{\mathrm{rand}}(\mu) \le \kappa_{p,q}^{\mathrm{Pierce}} \, \left[ \int_{\R^d} \left\|x\right\|^q \,\dd \mu(x) \right]^{1/q} \, N^{-1/d}
\end{align}
for all $N\in\N$. 
\end{Theorem}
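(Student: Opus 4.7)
I follow the classical Pierce-type strategy: decompose $\R^d$ into dyadic shells around the origin, apply a uniform-type random-quantization bound inside each shell, and control the shell weights via the $q$-th moment hypothesis. Set $A_0 := \{x : \|x\| \le 1\}$ and $A_k := \{x : 2^{k-1} < \|x\| \le 2^k\}$ for $k \ge 1$; write $p_k := \mu(A_k)$, $N_k := \#\{i \le N : X_i \in A_k\}$, and $M := \bigl(\int \|x\|^q \, \dd\mu\bigr)^{1/q}$. A Markov-type argument gives the tail bound $P_{>k} := \sum_{j > k} p_j \le C M^q 2^{-kq}$.

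As a preliminary I would upgrade the uniform bound of Section~\ref{sec_uni} to a bounded-domain estimate of the shape $\E[\rho_p^p(\nu,\hat\nu_n)] \le C D^p n^{-p/d}$, valid for any probability measure $\nu$ supported on a set of diameter $\le D$ and its $n$-sample empirical $\hat\nu_n$; this can be obtained from the uniform case either by a cube-refinement argument or by pushing the uniform measure on a containing cube through a Knothe-type Lipschitz transport. Conditional on the sample, I then construct a shell-respecting coupling of $\mu$ and $\hat\mu_N$: inside each $A_k$ with $N_k \ge 1$ I use the optimal within-shell transport on the common mass $\min(p_k, N_k/N)$, while the residual mass is routed via a telescoping scheme that moves the cumulative imbalance $S_k := \sum_{j \le k}(N_j/N - p_j)$ across the boundary between $A_k$ and $A_{k+1}$, at cost $\lesssim 2^{(k+1)p}$ per unit mass.

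Using $\E[N_k^{-p/d}\,\ind_{N_k \ge 1}] \le C (Np_k)^{-p/d}$ (valid since $p/d < 1$), the expected within-shell contribution is bounded by $C N^{-p/d} \sum_{k} 2^{kp} p_k^{1-p/d}$. The key analytic estimate is then $\sum_k 2^{kp} p_k^{1-p/d} \le C M^p$: I split the sum at $k_\ast \sim \log_2 M$; for $k \le k_\ast$ the trivial bound $p_k^{1-p/d} \le 1$ turns the sum into a geometric one of order $M^p$, while for $k > k_\ast$ the tail bound $p_k \le C M^q 2^{-kq}$ reshapes the $k$-th term into $C M^{q(d-p)/d}\, 2^{-k\delta}$ with $\delta := q(d-p)/d - p$, positive precisely by the hypothesis $q > pd/(d-p)$, so the series is again dominated by its value at $k_\ast$ and gives $\lesssim M^p$.

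The cross-shell cost is handled by $\sum_k 2^{kp}\,\E|S_k|$: I bound $\E|S_k|$ by the variance estimate $\sqrt{P_{>k}/N}$ in one regime and by the deterministic tail estimate $2 P_{>k}$ in the other, optimizing around $P_{>k} \sim 1/N$ and exploiting $p < d/2$ so that the extra $N^{-1/2}$ is never worse than $N^{-p/d}$. The hard part is exactly this cross-shell bookkeeping: the naive per-shell estimate $\E|N_k/N - p_k| \le \sqrt{p_k/N}$ would only close under the strictly stronger condition $q > 2p$, so one really has to pass to the cumulative imbalances $S_k$ and balance the two regimes of $\E|S_k|$ in order to match the sharp hypothesis $q > pd/(d-p)$ and obtain the target rate $C M^p N^{-p/d}$.
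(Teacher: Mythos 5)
Your overall architecture matches the paper's proof of Theorem~\ref{Pierce}: dyadic shells around the origin, a within-region random-quantization bound of order $n^{-p/d}$ applied conditionally on the (multinomial) shell counts, Markov control of the shell masses via the $q$-th moment, and the summation $\sum_k 2^{kp}p_k^{1-p/d}\lesssim M^p$, which closes exactly under $q>dp/(d-p)$. The decisive gap is your ``preliminary''. The estimate $\E[\rho_p^p(\nu,\hat\nu_n)]\le C D^p n^{-p/d}$ for an \emph{arbitrary} probability measure $\nu$ on a set of diameter $D$ is the main technical content of the whole argument (Lemma~\ref{le0301-1} and Proposition~\ref{coarse bound new}), and neither of your sketched derivations produces it. A Knothe-type Lipschitz push-forward of the uniform measure cannot reach a general $\nu$: for singular or atomic $\nu$ no such Lipschitz map exists, and even for absolutely continuous $\nu$ the Lipschitz constant depends on $\nu$ and is unbounded over the class, while the constant in the claim must be universal. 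The ``cube-refinement argument'' is precisely where the work lies: the paper proves the bound directly via nested dyadic partitions, the $\cP_l$-approximations of $\mu$ to $\hat\mu_N$, an $L^1$--variance bound of order $2^{d/2}\sqrt{k}/N$ per parent cell from the conditional multinomial structure, and the geometric summation $\sum_l 2^{(d/2-p)l}$ over scales --- which is where $p<d/2$ enters essentially. Note also that the logical dependence runs opposite to your plan: the general-measure cube bound is proved first and is then an ingredient in the uniform-case asymptotics of Section~\ref{sec_uni}, not a consequence of them. Without this lemma your proof does not close.

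On the other hand, your worry about the cross-shell term is well founded, and here your proposal is arguably more careful than the paper. The paper uses exactly the per-shell estimate $\E|\mu(B_n)-\hat\mu_N(B_n)|\le N^{-1/2}\mu(B_n)^{1/2}$ in \eqref{ieq1612-1}, and the resulting geometric series $\sum_n 2^{n(p-q/2)}$ converges only for $q>2p$, a condition that does \emph{not} follow from $q>dp/(d-p)$ when $p<d/2$. Your cumulative-imbalance telescoping with the two regimes of $\E|S_k|$ repairs this and matches the sharp hypothesis; so does the shorter interpolation $\E|\mu(B_n)-\hat\mu_N(B_n)|\le \min\bigl(N^{-1/2}\mu(B_n)^{1/2},\,2\mu(B_n)\bigr)\le 2N^{-p/d}\mu(B_n)^{1-p/d}$, whose shell sum converges precisely when $q(1-p/d)>p$ and yields a contribution $O(N^{-p/d})$, sufficient since $p/d<1/2$. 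A minor point: your within-shell coupling on the common mass $\min(p_k,N_k/N)$ is cleaner if, as in the paper, you first rescale $\mu|_{A_k}$ to total mass $N_k/N$ and use that, conditionally on $N_k$, the normalized empirical measure on $A_k$ is that of $N_k$ i.i.d.\ samples from $\mu|_{A_k}/p_k$, so Jensen applied to $k\mapsto k^{1-p/d}$ gives the within-shell bound directly.
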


\begin{Remark}\begin{itemize}
\item The constant in the statement of Theorem \ref{thm_pierce} is explicit, see Theorem \ref{Pierce}. Its value depends on the
chosen norm on $\R^d$. 
\item For $p>\frac d2$ and discrete measures $\mu$, the random approach typically  induces errors $V_{N,p}^\mathrm{rand}(\mu)$ 
that are not of order $\mathcal{O}(N^{-1/d})$: take, for instance, two different points $a,b\in\R^d$ and let 
$\mu=\frac 12\delta_a+\frac12\delta_b$. Then $N\,\hat\mu_N(\{a\})$ is binomially distributed with parameters $N$ and $\frac12$. 
Consequently,
$$
V_{N,p}^\mathrm{rand}(\mu)=\E [\rho_p^p(\mu,\hat \mu_N)]^{1/p}= \|a-b\| \,\E\bigl [\bigl|\hat\mu_N(\{a\})-\frac12\bigr|\bigr]^{1/p}
$$
is of order $N^{-1/2p}$ and, hence, converges to zero strictly slower than $N^{-1/d}$.
\item In \cite{AKT83}, the case where $d=2$, $p=1$ and $\mu=\cU$ is treated. There it is found that  the $L^1$-Wasserstein distance between two independent realisations of $\hat\cU_N$ is typically of order $N^{-1/2} (\log N)^{1/2}$ which shows the necessity of the assumption $d\geq3$.
\item For the uniform distribution $\cU$ on $[0,1)^d$, the results of Talagrand \cite{Tal94} imply that 
$V_{N,p}^\mathrm{rand}(\cU)$ is always of order $N^{-1/d}$ as long as $d\ge 3$. 
\end{itemize} 
\end{Remark}

The following theorem is a \emph{high resolution formula} for quantization by empirical measures. 

\begin{Theorem}\label{thm2} Let $p\in[1,\frac {d}2)$. 
\begin{itemize}\item[(i)] Let $\mathcal U$ denote the uniform distribution on $[0,1)^d$. There exists a constant 
$\kappa^\mathrm{unif}_p\in(0,\infty)$ such that
$$
\lim_{N\to\infty} N^{1/d} \, V_{N,p}^\mathrm{rand}(\cU)=\kappa_p^\mathrm{unif}.
$$
Further, there exist a constant $\underline{\kappa}^\mathrm{unif}_p\in(0,\infty)$ such that
$$
\lim_{N\to\infty} N^{1/d} \,\underline{V}_{N,p}^\mathrm{rand}=\underline{\kappa}_p^\mathrm{unif}.
$$

\item[(ii)] Let $\mu$ be a probability measure on $\R^d$ that  has a finite $q$th moment for some $q>\frac{dp}{d-p}$ and suppose that 
$\frac{\dd \mu_a}{\dd \lambda^d}$ is Riemann integrable or $p=1$.  Then
\begin{align}\label{eq0201-1}
\limsup_{N\to\infty} N^{1/d} \, V_{N,p}^\mathrm{rand}(\mu) \le \kappa_p^\mathrm{unif}  \, \left(\int_{\R^{d}} 
\left(\frac{\dd \mu_a}{\dd \lambda^d}\right)^{1-\frac{p}{d}}\dd\lambda^d\right)^{1/p}
\end{align}
and
\begin{align}\label{eq0201-2}
\liminf_{N\to\infty} N^{1/d} \, V_{N,p}^\mathrm{rand}(\mu) \ge \underline{\kappa}_p^\mathrm{unif}  \, \left(\int_{\R^{d}} 
\left(\frac{\dd \mu_a}{\dd \lambda^d}\right)^{1-\frac{p}{d}}\dd\lambda^d\right)^{1/p}.
\end{align}
\end{itemize}
\end{Theorem}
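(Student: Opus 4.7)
The plan is to prove part (i) first via a scaling/subadditivity analysis of the uniform case, and then to transfer to general $\mu$ in part (ii) by partitioning $\R^d$ into a fine grid, combining the uniform result on each cube with the Pierce-type estimate (Theorem~\ref{thm_pierce}) to control the tail.

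For part (i), I would exploit the following self-similar structure. Partition $[0,1)^d$ into $m^d$ subcubes $C_1,\dots,C_{m^d}$ of side $1/m$ and let $K_i$ be the number of sample points lying in $C_i$. Conditionally on $(K_i)$, the points inside $C_i$ are i.i.d.\ uniform on $C_i$, so after rescaling $C_i$ to the unit cube the within-cube transport cost equals $m^{-p}$ times an independent copy of $\rho_p^p(\cU,\hat\cU_{K_i})$. Taking expectations and summing over cubes gives an almost-recursive inequality relating $V_{N,p}^p(\cU)$ to $V_{N/m^d,p}^p(\cU)$, at the price of a ``mass-mismatch'' correction that redistributes the discrepancies $K_i/N-m^{-d}$ across cubes. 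For the leakage variant $\underline V_{N,p}^{\mathrm{rand}}$ this correction disappears, since the definition of $\Lambda$ allows the reference measure to shed mass at cube boundaries. Hence $\underline V_{N,p}^{\mathrm{rand}}$ yields a clean near-subadditivity that, via a Fekete/Hammersley-type argument applied to $N\mapsto N^{p/d}\,\underline V_{N,p}^p$, produces the constant $\underline\kappa_p^{\mathrm{unif}}$. For $V_{N,p}^{\mathrm{rand}}(\cU)$ one controls the mismatch using Talagrand's matching bound (which already tells us the correct order is $N^{-1/d}$) together with concentration of the multinomial counts $K_i$, from which convergence to a constant $\kappa_p^{\mathrm{unif}}$ follows.

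For part (ii), I would apply the same partition idea to $\R^d$. Fix a grid of cubes of side $h$; on each cube $C$ approximate $\mu|_C$ by the uniform distribution of mass $\mu(C)$ on $C$, which is accurate because Riemann integrability of $\dd\mu_a/\dd\lambda^d$ makes the density nearly constant on small cubes (for $p=1$ one can bypass this via a direct Kantorovich-Rubinstein argument). Conditioning on the cube occupation counts, each cube's contribution is governed by part (i) applied at the local sample size, and the scaling $\mathrm{cube~side}\,\times\,(\mathrm{local~density})^{-1/d}$ turns the sum into a Riemann sum for $\bigl(\int(\dd\mu_a/\dd\lambda^d)^{1-p/d}\,\dd\lambda^d\bigr)^{1/p}$; the Pierce estimate of Theorem~\ref{thm_pierce} applied to faraway cubes and to $\mu_s$ shows that their contribution is negligible under the $q$-th moment assumption with $q>dp/(d-p)$. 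Letting $N\to\infty$ first and then $h\to 0$ yields the upper bound \eqref{eq0201-1}. For the matching lower bound \eqref{eq0201-2}, the observation is that any transport from $\hat\mu_N$ to $\mu$ restricted to a cube $C$ gives rise to a $\Lambda$-admissible transport on $C$ (rescaled), so Jensen/convexity over the grid plus part~(i) for $\underline V$ yields the claimed lower Riemann sum, which again tends to the density integral as $h\to 0$.

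The main obstacle I expect is the nonlocality in the upper bound: because weights are forced to be equal, mass mismatches between cubes cannot be repaired for free, and the subadditivity for $V$ (as opposed to $\underline V$) is only approximate. Making the mismatch term genuinely negligible after the $N^{p/d}$ normalisation is the central technical point, and the reason the authors separate $\underline\kappa_p^{\mathrm{unif}}$ from $\kappa_p^{\mathrm{unif}}$ rather than proving them equal. A secondary difficulty is coordinating the two limits $N\to\infty$ and $h\to 0$ in part (ii) so that the Pierce tail bound, the Riemann sum convergence, and the per-cube application of part~(i) all remain uniformly controlled.
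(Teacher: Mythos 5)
Your overall architecture (partition into subcubes, near sub/super-additivity, Pierce-type control of tails and of $\mu_s$, Riemann sums for part (ii), Kantorovich--Rubinstein for $p=1$) matches the paper's, but there is a genuine gap at exactly the point you flag as "the central technical point": the mass-mismatch correction in part (i), for which you offer only "Talagrand's matching bound together with concentration of the multinomial counts". This does not close the argument. To turn the recursive inequality into convergence you must compare $N^{1/d}V^{\mathrm{rand}}_{N,p}(\cU)$ for large $N$ with $M^{1/d}V^{\mathrm{rand}}_{M,p}(\cU)$ for a \emph{fixed} good $M$ realising (nearly) the $\liminf$; this forces the number of subcubes to be $m^d\approx N/M$, growing with $N$. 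In that regime the total count discrepancy is $\sum_i|K_i-M|\sim\sqrt{m^dN}=N/\sqrt{M}$ points, and since an excess in one subcube may have to be matched with a deficit in a distant one, a single-scale redistribution moves these points distances of order $1$, contributing $\mathfrak{d}^p M^{-1/2}$ to $\E[\rho_p^p]$ -- a quantity independent of $N$, which after multiplication by $N^{p/d}$ diverges. The paper's essential device, absent from your proposal, is the recursive coupling of Lemma~\ref{divide} applied level by level along a $2^d$-ary tree of boxes: at level $l$ only $O(2^{dl/2}\sqrt N)$ points are displaced and each moves only within its level-$l$ box, i.e.\ a distance $O(a2^{L-l})$, so the total cost is a geometric sum dominated by the finest scale and yields a stratified sample $Y$ (exactly $M$ points per subcube) with
\begin{equation*}
\E\Bigl[\tfrac 1N\sum_{j=1}^N\|X_j-Y_j\|^p\Bigr]^{1/p}\le C\bigl[N^{-\frac1{2p}}+M^{-(\frac1{2p}-\frac1d)}N^{-\frac1d}\bigr],
\end{equation*}
which after normalisation by $N^{1/d}$ is $O(M^{-(\frac1{2p}-\frac1d)})$, vanishing as $M\to\infty$. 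Without this multiscale localisation of the discrepancy transport, the Fekete-type argument does not close.

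Two smaller corrections. First, your claim that the mismatch "disappears" for $\underline V^{\mathrm{rand}}_{N,p}$ because $\Lambda$ allows shedding mass at cube boundaries is not right: $\Lambda$ only permits extra mass on $\partial[0,1]^d$ (the constraint $\cU'(A)\le\cU(A)$ is imposed for $A\subset(0,1)^d$), and in any case the random occupation numbers $K_i$ still differ from $M$, so the same stratification coupling is needed; the paper indeed reuses the identical coupling $Y$ and merely replaces subadditivity by superadditivity of $\underline\rho_p$ under restriction to subcubes. Second, the separation of $\underline\kappa_p^{\mathrm{unif}}$ from $\kappa_p^{\mathrm{unif}}$ is not caused by the approximate nature of subadditivity for $V$; it arises because the lower bound in (ii) requires restricting a globally optimal transport to a subcube, where the restricted source need not match $\cU$ on that subcube -- the leakage-modified metric is what makes that restriction admissible. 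Your part (ii) outline otherwise follows the paper's route (Propositions~\ref{prop2912-1} and~\ref{prop2912-2}, Theorem~\ref{asymptotic}, and Lemmas~\ref{Steffen} and~\ref{total} for $p=1$) and is sound modulo part (i).
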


\begin{Remark}
We conjecture that 
$\underline{\kappa}_p^\mathrm{unif} = \kappa_p^\mathrm{unif}$ in which case the inequality and $\limsup$ in \eqref{eq0201-1}  are actually an equality and $\lim$. Proving  the equality  $\underline{\kappa}_p^\mathrm{unif} = \kappa_p^\mathrm{unif}$ seems to be a general open problem in transport problems. Similar problems arise in 
Huesmann and Sturm in \cite{HS10} for optimal transports from Poisson point processes with Lebesgue intensity to Lebesgue measure. 
\end{Remark}

Let us compare our results with the classical high resolution formulas, see \cite[Theorem 6.2]{GL00}. The asymptotics of $V_{N,p}^\mathrm{opt}$ defined in 
\eqref{opt} satisfies
\begin{equation}\label{GL}
\lim_{N\to\infty} N^{1/d} \, V_{N,p}^\mathrm{opt}(\mu)= c_{p,d} \left(\int_{\R^{d}} 
\left(\frac{\dd \mu_a}{\dd \lambda^d}\right)^{\frac{d}{d+p}}\dd\lambda^d\right)^{1/d+1/p},
\end{equation}
whenever $\mu$ has a finite moment of order $q$ for some $q>p$. 
Here, the  constant $c_{p,d}$ is the corresponding limit for the uniform distribution 
on the unit cube in $\R^d$.  Its numerical value is known in a few special cases.

Theorem \ref{thm_pierce} can be used to improve \cite[Theorem 9.1(a)]{GL00}: there the validity of an asymptotic formula for the random quantization error is 
shown to be equivalent to the uniform integrability of $(N^{p/d}\min_{1 \le i \le N} \|X-Y_i\|^p)_{N \ge 1}$ where (for example) $X,Y_1,...$ are independent with
law $\mu$. Theorem \ref{thm_pierce} shows that uniform integrability holds provided that $1 \le p <d/2$ and $\mu$ has a finite moment of order $q$ for some 
$q>\frac {dp}{d-p}$.

Note that the integral term on the right hand side of (\ref{GL}) differs from the one in (\ref{eq0201-1}) and (\ref{eq0201-2}). This effect can be explained as follows: 
for a sequence of optimal codebooks $(\mathcal C(N))_{N\geq 1}$ of size $N$ the empirical measures $\frac 1N \sum_{x\in\mathcal C(N)}\delta _x$ tend to a 
measure that differs from $\mu$. In fact optimal codebooks allocate more points in the tails of the distribution. Since our approach does not account for 
such a correction, it is natural to expect a loss of efficiency for heavy tailed distributions. For arbitrary codebooks whose empirical distributions tend 
to the measure $\mu$, one has lower bounds which incorporate the same integral term as in our high resolution formula, see \cite[Thm.\ 7.2]{Der09}. 

A high resolution formula is also available for quantization with random codebooks and \emph{optimally} chosen weights. As a consequence of  \cite[Theorem 9.1(a)]{GL00} and Theorem~\ref{thm_pierce}, one has under the assumption of  Theorem~\ref{thm2} (without the Riemann integrability) equality in (\ref{eq0201-1}) for a different constant. Indeed, Theorem~\ref{thm_pierce} allows to verify an integrability assumption in \cite[Theorem 9.1(a)]{GL00} and thus to improve the result.
As a consequence, postprocessing of the weights can in the limit improve the error by a constant factor, irrespective the distribution~$\mu$.



\subsection{Preliminaries}


For a finite signed measure $\mu$ on the Borel sets of $\R^d$, we write $\|\mu\|:=|\mu|(\R^d)$ for its total variation norm (using the same symbol as 
for the norm on $\R^d$ should not cause any confusion). For finite (nonnegative) measures $\mu$ and $\nu$
we denote by $\mu\wedge \nu$ the largest measure 
that is dominated by $\mu$ and $\nu$. Furthermore, we set $(\mu-\nu)_+:=\mu-\mu\wedge \nu$.

Next, we introduce concatenation  of transports.
A transport $\xi$, i.e.\ a finite measure $\xi$ on $\R^d\times \R^d$, will be associated to a probability kernel $K$ and a measure $\nu$ on 
$\R^d$ via
\begin{align}\label{eq1911-1}
\xi(\dd x,\dd y)=\nu(\dd x) K(x,\dd y),
\end{align}
so $\nu$ is the first marginal of $\xi$.
We call $\xi$ the transport with source $\nu$ and kernel $K$. Let $\cK$  denote the set of probability kernels from 
$(\R^d,\mathcal{B}^d)$ into itself and
consider the semigroup $(\cK,*)$, where the operation $*$ is defined via 
$$
K_1*K_2 (x,A):=\int K_1(x,\dd z) K_2(z,A) \qquad(x\in\R^d, A\in\mathcal{B}^d)
$$
Now we can iterate transport schedules: Let $\nu_0,\dots,\nu_n$ be measures on $\R^d$ with identical total mass and 
let $\xi_k\in\cM(\nu_{k-1},\nu_k)$. Then the concatenation of the transports $\xi_1,\dots,\xi_n$  is formally the transport 
described by the source $\nu_0$ and the probability kernel $K=K_1*\dots*K_n$, where $K_1,\dots,K_n$ are the kernels 
associated to $\xi_1,\dots,\xi_n$. Note that the relation (\ref{eq1911-1}) defines the kernel uniquely up to $\nu$-nullsets 
so that the concatenation of transport schedules is a well-defined operation on the set of transports. In analogy to the 
operation $*$ on  $\cK$, we write $\xi_1*\dots *\xi_n$ for the concatenation of the transport schedules.

We summarize elementary properties of the Wasserstein metric in a lemma.

\begin{Lemma}\label{wasser_prop}
Let $\xi, \mu,\mu_1,\dots$ and $\nu,\nu_1,\dots$ be finite measures on $\R^d$ such that $\|\xi\| = \|\mu\| =\| \nu\|$.
\begin{enumerate}
\renewcommand{\labelenumi}{\textbf{(\roman{enumi})}}
	\item \textbf{Convexity:} Suppose that $\mu=\sum_{k\in\N}\mu_k$ and $\nu=\sum_{k\in\N} \nu_k$ and that for all 
$k\in\N$, $\|\mu_k\|=\|\nu_k\|$. Then
	\begin{equation}
	\rho_{p}^p\left(\mu,\nu\right) \leq \sum_{k=1}^{\infty}\rho_{p}^{p}\left(\mu_k,\nu_k\right).
	\end{equation}
	
	\item \textbf{Triangle-inequality:} One has
	\begin{equation}
	\rho_{p}\left(\mu,\nu\right) \leq \rho_{p}(\mu,\xi) +\rho_p(\xi,\nu).
	\end{equation}
	\item \textbf{Translation and scaling}: Let $T:\R^d \to \R^d$ be a map, which consists of a translation  and a scaling by 
the factor $a>0$. Then
	\begin{equation}
	\rho_{p}\left( \mu \circ T^{-1}, \nu \circ T^{-1} \right)=a\,\rho_{p}\left( \mu, \nu \right).
	\end{equation}
\end{enumerate}
\end{Lemma}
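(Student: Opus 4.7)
The plan is to prove each part by an explicit construction of transports, working with $\eps$-approximate minimisers since the infimum defining $\rho_p$ need not be attained. The three parts decrease in difficulty from (i) to (iii).

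For \textbf{(i)}, I would fix $\eps>0$ and choose for each $k$ a transport $\xi_k\in\cM(\mu_k,\nu_k)$ with cost at most $\rho_p^p(\mu_k,\nu_k)+\eps_k$, where the $\eps_k>0$ satisfy $\sum_k\eps_k<\eps$ (and $\xi_k=0$ whenever $\|\mu_k\|=0$). The superposition $\xi:=\sum_k\xi_k$ has first and second marginals $\sum_k\mu_k=\mu$ and $\sum_k\nu_k=\nu$, so $\xi\in\cM(\mu,\nu)$, and
$$\rho_p^p(\mu,\nu)\le\int\|x-y\|^p\,\dd\xi=\sum_k\int\|x-y\|^p\,\dd\xi_k\le\sum_k\rho_p^p(\mu_k,\nu_k)+\eps.$$
Letting $\eps\to0$ yields the claim.

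For \textbf{(ii)}, I would use the standard gluing construction. Given $\eps$-optimal $\xi_1\in\cM(\mu,\xi)$ and $\xi_2\in\cM(\xi,\nu)$, disintegrate them as $\xi_1(\dd x,\dd z)=\mu(\dd x)K_1(x,\dd z)$ and $\xi_2(\dd z,\dd y)=\xi(\dd z)K_2(z,\dd y)$; such kernels exist since $\R^d$ is Polish. I then form
$$\zeta(\dd x,\dd z,\dd y):=\mu(\dd x)\,K_1(x,\dd z)\,K_2(z,\dd y)$$
on $(\R^d)^3$; its $(x,y)$-marginal lies in $\cM(\mu,\nu)$, while its $(x,z)$- and $(z,y)$-marginals coincide with $\xi_1$ and $\xi_2$ respectively. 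Applying Minkowski's inequality in $L^p(\zeta)$ to the pointwise bound $\|x-y\|\le\|x-z\|+\|z-y\|$ gives $\rho_p(\mu,\nu)\le\rho_p(\mu,\xi)+\rho_p(\xi,\nu)+2\eps$, and letting $\eps\to 0$ concludes.

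For \textbf{(iii)}, the product map $\tilde T(x,y):=(Tx,Ty)$ is a bijection between $\cM(\mu,\nu)$ and $\cM(\mu\circ T^{-1},\nu\circ T^{-1})$ via pushforward, and $\|Tx-Ty\|^p=a^p\|x-y\|^p$ by affinity of $T$, so the identity follows by a change of variables. The only genuine subtlety throughout is non-attainment of the infimum defining $\rho_p$, which is why the summable error sequence in (i) and the kernel disintegration in (ii) are needed; once these are in place the argument is straightforward bookkeeping of marginals.
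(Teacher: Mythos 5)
Your proof is correct. The paper states this lemma without proof, treating it as a summary of elementary, standard properties of the Wasserstein metric; the three arguments you supply (superposition of near-optimal couplings for convexity, the gluing construction via disintegration for the triangle inequality, and pushforward under $\tilde T$ for translation/scaling) are exactly the standard ones, and your handling of the non-attainment of the infimum and of the zero-mass summands is sound. (The only cosmetic point: in (ii) an $\eps$-optimal coupling in the sense of $p$-th power cost gives an error term $2\eps^{1/p}$ rather than $2\eps$, which of course changes nothing.)
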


\section{Proof of the Pierce type result}\label{sec_pierce}

In order to prove Theorem~\ref{thm_pierce}, we first derive an estimate for  general distributions on the unit cube $[0,1)^d$.

\begin{Proposition}\label{coarse bound new}
 Let  $1\le p<\frac{d}{2}$. There exists a constant $\kappa_p^\mathrm{cube}\in (0,\infty)$ such that for any probability measure 
$\mu$ on $[0,1)^d$ and $N\in\N$
$$
 V_{N,p}^\mathrm{rand}(\mu) \leq \kappa^{\mathrm{cube}}_p\, N^{-\frac{1}{d}}.
$$
\end{Proposition}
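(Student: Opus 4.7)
The plan is to construct an explicit transport from $\mu$ to $\hat\mu_N$ using a hierarchical dyadic decomposition of $[0,1)^d$ and to bound its expected $p$-cost directly. Set $K:=\lceil\tfrac{1}{d}\log_2 N\rceil$, so that $2^{Kd}\asymp N$, and for $k=0,1,\dots,K$ let $\Delta_k$ denote the partition of $[0,1)^d$ into $2^{kd}$ half-open dyadic cubes of side $2^{-k}$; each cube $Q\in\Delta_k$ has diameter at most $c_0 2^{-k}$ for a constant $c_0$ depending only on the norm. The refinements $\Delta_0,\Delta_1,\ldots,\Delta_K$ organise these cubes into a $2^d$-ary tree in which every pair of leaf cubes has a unique lowest common ancestor.

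I would then build $\xi\in\cM(\mu,\hat\mu_N)$ in two pieces. The \emph{local} piece transports, inside each leaf cube $Q\in\Delta_K$, a part of $\mu|_Q$ of mass $m_Q:=\min\{\mu(Q),\hat\mu_N(Q)\}$ to a matching rescaling of $\hat\mu_N|_Q$; any transport staying inside $Q$ works and, since $\sum_Q m_Q\le 1$, this contributes at most $c_0^p 2^{-Kp}$ to the total cost. The leaf-wise excesses $(\mu(Q)-\hat\mu_N(Q))_+$ and deficits $(\hat\mu_N(Q)-\mu(Q))_+$ then have to be shipped between different leaves, which I would do by routing the resulting flow along the tree so that mass moving between two leaves whose lowest common ancestor lies in $\Delta_k$ travels Euclidean distance at most $c_0 2^{-k}$. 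A mass-conservation argument along the tree shows that the total mass whose lowest common ancestor lies at level at most $k$ equals $\tfrac12\sum_{Q\in\Delta_{k+1}}|\mu(Q)-\hat\mu_N(Q)|$.

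Combining the two pieces yields the deterministic bound
\begin{align*}
\rho_p^p(\mu,\hat\mu_N)\le c_0^p\, 2^{-Kp}+\tfrac12 c_0^p\sum_{k=0}^{K-1}2^{-kp}\sum_{Q\in\Delta_{k+1}}\bigl|\mu(Q)-\hat\mu_N(Q)\bigr|.
\end{align*}
Since $N\hat\mu_N(Q)\sim\mathrm{Bin}(N,\mu(Q))$, one has $\E|\mu(Q)-\hat\mu_N(Q)|\le\sqrt{\mu(Q)/N}$, and Cauchy--Schwarz over $\Delta_{k+1}$ gives $\sum_{Q\in\Delta_{k+1}}\E|\mu(Q)-\hat\mu_N(Q)|\le\sqrt{2^{(k+1)d}/N}$. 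Because $p<d/2$, the resulting geometric series $\sum_k 2^{k(d/2-p)}$ is dominated by its largest term at $k=K-1$; together with $2^K\asymp N^{1/d}$ this yields $\E\rho_p^p(\mu,\hat\mu_N)=O(N^{-p/d})$. Taking $p$-th roots gives the claim with a constant depending only on $p$, $d$, and the chosen norm.

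The main obstacle is the rigorous bookkeeping needed to realise the hierarchical transport as a single element of $\cM(\mu,\hat\mu_N)$ with the asserted cost. One must fuse the local transports inside the leaf cubes with the inter-leaf routing into a coherent transport plan; this is achieved by iteratively applying Lemma~\ref{wasser_prop}(i) together with the concatenation of transport kernels introduced in the Preliminaries, decomposing $\mu$ at each level into sub-measures matching the tree structure. The remaining ingredients --- the binomial variance estimate, Cauchy--Schwarz, and the summation of a geometric series --- are routine.
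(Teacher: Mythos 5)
Your proof is correct and follows essentially the same route as the paper: a hierarchical dyadic transport whose per-level cost is controlled by the total variation of $\mu-\hat\mu_N$ over the cubes of that level, estimated via the binomial bound $\E|\mu(Q)-\hat\mu_N(Q)|\le\sqrt{\mu(Q)/N}$ plus Cauchy--Schwarz, and summed as a geometric series peaking at scale $2^{-K}\asymp N^{-1/d}$. The only cosmetic difference is that you truncate the tree at level $K$ and absorb the remainder into a leaf-level diameter bound, whereas the paper (Lemma~\ref{le0301-1}) sums over all levels and uses $\zeta(t)=\sqrt{t}\wedge t$ together with the conditional multinomial structure to make the deep levels summable; both devices play the same role.
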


\begin{Remark}
The constant $\kappa^\mathrm{cube}_p$ is explicit. Let $\mathfrak{d}=\sup_{x,y\in[0,1)^d}\|x-y\|$ denote the diameter of $[0,1)^d$. Then
\[
 \kappa^{\mathrm{cube}}_p=\mathfrak{d} \,2^{\frac{d-2}{2p}}  \Bigl[ \frac{1}{1-2^{p-\frac{d}{2}}} + \frac{1}{1-2^{-p}} \Bigr]^{\frac{1}{p}}.
\]
\end{Remark}

For the proof of Proposition \ref{coarse bound new} we use a nested sequence of partitions of $B=[0,1)^d$. 
Note that $B$ can be partitioned into $2^d$ translates  
$B_1,\dots,B_{2^d}$  of $2^{-1} B$. We iterate this procedure and partition each set $B_k$ into $2^d$ translates 
$B_{k,1},\dots,B_{k,2^d}$ of $2^{-2} B$. We  continue this scheme obeying the rule that each set $B_{k_1,\dots,k_l}$ is partitioned 
into $2^d$ translates $B_{k_1,\dots,k_l,1},\dots, B_{k_1,\dots,k_l,2^d}$ of $2^{-(l+1)}B$.  
These translates of $2^{-l} B$ form a partition of $B$ and we denote this 
collection of sets by $\cP_l$, the $l$th level. We now endow the sets $\cP:=\bigcup_{l=0}^\infty \cP_l$ with a $2^d$ary tree 
structure. $B$ denotes the root of the tree and the father of a set $C\in \cP_l$ ($l\in\N$) is the unique set $F \in \cP_{l-1}$ 
that contains  $C$.

\begin{Lemma} \label{le0301-1} Let $\mu$ and $\nu$ be two probability measures supported on $B$ such that for all $C\in\cP$
$$
\nu(C)>0 \ \Rightarrow \ \mu(C)>0.
$$
Then
$$
\rho_p^p(\mu,\nu) \le \frac12  \mathfrak{d}^p \sum_{l=0}^\infty 2^{-pl} \sum_{F\in\cP_l} \,\sum_{C \mathrm{child\, of} F} \Bigl|\nu(C)-\nu(F) 
\frac{\mu(C)}{\mu(F)}\Bigr|
$$
with the convention that $\frac 00=0$. 
\end{Lemma}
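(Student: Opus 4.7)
The plan is to build an explicit coupling $(X,Y)$ of $\mu$ and $\nu$ using the tree $\cP$ in a top-down greedy fashion and then bound $\rho_p^p(\mu,\nu)\le\E[\|X-Y\|^p]$ directly. A naive attempt through the intermediate measures $\nu_l|_F=\frac{\nu(F)}{\mu(F)}\mu|_F$ and the triangle inequality of Lemma~\ref{wasser_prop} only controls $\rho_p(\mu,\nu)$, so taking $p$th powers fails to recover the \emph{additive} level-wise bound in the claim when $p>1$; one must instead couple all levels simultaneously via the tree.

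Concretely, I would define $(X,Y)$ recursively. Starting at the root $B$, at each cell $F\in\cP$ reached ``together'' by $X$ and $Y$ one uses the maximum-coincidence coupling between the conditional child distributions $\mu(\cdot)/\mu(F)$ and $\nu(\cdot)/\nu(F)$: the pair enters child $C$ jointly with probability $\min(\mu(C)/\mu(F),\nu(C)/\nu(F))$, and otherwise gets matched between ``surplus'' and ``deficit'' children of $F$. If the pair lands in the same child we recurse inside it; once it lands in distinct children the pair has ``split'', and from then on each coordinate descends independently according to the appropriate conditional law. The hypothesis $\nu(C)>0\Rightarrow\mu(C)>0$ keeps all conditional distributions well defined, and cells with $\mu(F)=0$ are automatically invisible to the construction. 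The marginals $X\sim\mu$ and $Y\sim\nu$ hold level by level, and the full joint law exists by a standard gluing argument.

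For the cost estimate, let $L(X,Y)$ be the largest level at which $X$ and $Y$ lie in a common cell; by construction it equals the level immediately before the pair splits. Since that common cell at level $L$ has diameter at most $\mathfrak{d}\,2^{-L}$, we have
\[
\rho_p^p(\mu,\nu)\le \E\bigl[\|X-Y\|^p\bigr]\le \mathfrak{d}^p\sum_{L=0}^\infty 2^{-pL}\,\IP(L(X,Y)=L).
\]
Writing $\mu_F,\nu_F$ for the conditional laws on $F$ and setting $\gamma_F:=\IP(X,Y\text{ reach }F\text{ together})$, the recursive construction directly yields $\IP(L(X,Y)=L)=\sum_{F\in\cP_L}\gamma_F\cdot\mathrm{TV}(\mu_F,\nu_F)$, where $\mathrm{TV}$ denotes total variation.

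The only non-trivial input is the tautological inequality $\gamma_F\le\IP(Y\in F)=\nu(F)$, which absorbs the level-to-level interaction and is what makes the level-wise costs simply add up without losing a Minkowski factor. Combining it with the identity $\nu(F)\cdot\mathrm{TV}(\mu_F,\nu_F)=\tfrac12\sum_{C\text{ child of }F}\bigl|\nu(C)-\nu(F)\mu(C)/\mu(F)\bigr|$ (valid under the convention $0/0=0$) produces exactly the right-hand side of the lemma. I do not foresee a real obstacle beyond the minor bookkeeping needed to make the recursive coupling rigorous.
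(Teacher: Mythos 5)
Your proposal is correct and matches the paper's argument in all essentials: both construct a coupling down the dyadic tree, bound the displacement by $\mathfrak{d}\,2^{-L}$ where $L$ is the deepest level at which the two points still share a cell, and control the probability of separating below $F$ by $\tfrac12\sum_{C}\bigl|\nu(C)-\nu(F)\mu(C)/\mu(F)\bigr|$. The only cosmetic difference is that the paper realizes the coupling as a concatenation of transports between the successive $\cP_l$-approximations $\mu_l|_F=\tfrac{\nu(F)}{\mu(F)}\mu|_F$ (so the mass arriving at $F$ ``together'' is exactly $\nu(F)$ and no analogue of your inequality $\gamma_F\le\nu(F)$ is needed), followed by the limit $\mu_l\to\nu$ in $\rho_p$, whereas you couple $\mu$ and $\nu$ directly by recursive maximal couplings.
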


For the proof we use couplings defined via partitions. Let $(A_k)$ be a  (finite or countably infinite) 
Borel partition of the Borel set $A\subset\R^d$. 
For two finite measures $\mu_1,\mu_2$ on $A$ with equal masses, we call the measure $\nu$ defined by 
$$
\nu\big|_{A_k} =\frac{\mu_2(A_k)}{\mu_1(A_k)} \,\mu_1\big|_{A_k}
$$
the \emph{$(A_k)$-approximation} of $\mu_1$ to $\mu_2$ provided that it is well defined (i.e.~that $\mu_1(A_k)=0$ implies 
$\mu_2(A_k)=0$).

The $(A_k)$-approximation $\nu$ is associated to a transport from $\mu_1$ to $\nu$.
Note that $$(\mu_1\wedge \nu)\big|_{A_k}= \frac{\mu_1(A_k)\wedge \mu_2(A_k)}{\mu_1(A_k)} \,\mu_1\big|_{A_k}$$ and we define a transport 
$\xi\in \cM(\mu_1,\nu)$ via 
$$
\xi=(\mu_1\wedge \nu)\circ \psi^{-1} + \frac 1\delta (\mu_1-\nu)_+\otimes (\nu-\mu_1)_+ 
$$
where $\delta:=\frac12 \sum_k |\mu_1(A_k)- \mu_2(A_k)|$ and  $\psi:\R^d\to \R^d \times \R^d, x\mapsto(x,x)$. Then  
$$\xi(\{(x,y)\in\R^d\times \R^d: x\not=y\}) = \delta.
$$

\begin{proof}[ of Lemma~\ref{le0301-1}]
For $l\in\N_0$, we set
$$
\mu_l=\sum_{A\in \cP_l} \frac{\nu(A)}{\mu(A)}\, \mu|_A
$$
which is the $\cP_l$-approximation of $\mu$ to $\nu$.
By construction, one has for each set $F\in \cP_l$ with $l\in\N_0$
$$
\mu_l(F)=\mu_{l+1}(F).
$$
Moreover, provided that $\mu_l(F)>0$, one has for each child $C$ of $F$
$$
\mu_{l+1}|_C= \frac {\nu(C)} {\mu(C)} \,\mu|_C= \frac {\mu(F) \nu(C)} {\mu(C) \nu(F)}\, \mu_{l}|_C 
$$
so that $\mu_{l+1}|_F$ is the $\{C\in \cP_{l+1}: C\subset A\}$-approximation of $\mu_l|_F$ to $\nu|_F$. Hence, there exists a transport $\xi_F\in\cM(\mu_l|_F,\mu_{l+1}|_F)$ with
\begin{align}\label{eq0301-1}
\xi_F (\{(x,y): x\not=y\}) = \frac 12\sum_{C\text{ child of }F} \Big\lvert\nu(C)-\nu(F) \frac{\mu(C)}{\mu(F)}\Big\rvert.
\end{align}
Since each family $\cP_l$ is a partition of the root $B$, we have
$$\xi_{l+1}:=\sum_{F\in \cP_l} \xi_F\in\cM(\mu_l,\mu_{l+1}).$$  
Next, note that
$\rho_p(\mu_l,\nu)\le \mathfrak {d} 2^{-l}$
so that $\mu_l$ converges in the $p$th Wasserstein metric to $\nu$ which implies that
\begin{equation}\label{Grenz}
\rho_p(\mu,\nu)\le \sup_{l\in\N} \rho_p(\mu,\mu_l).
\end{equation}
The concatenation of the  transports $(\xi_l)_{l\in\N}$ leads to new transports
$$
\xi^{l}=\xi_1*\dots*\xi_l\in\cM(\mu,\mu_l)
$$
Each of the transports $\xi_k$ is associated to a  kernel $K_k$ and,  by Ionescu-Tulcea, there exists a sequence $(Z_l)_{l\in\N_0}$ of 
$[0,1)^d$-valued random variables with 
$$
\IP(Z_0\in A_0,\dots , Z_l\in A_l)= \int_{A_0} \int_{A_1} \dots \int_{A_{l-1}} K_l(x_{l-1},A_l)\, \dots \, K_1(x_0,\dd x_1) \, \mu(\dd x_0)
$$ for every $l\in\N$. 
Then the joint distribution of $(Z_0,Z_l)$ is $\xi^{l}$.
Let 
$$
L=\inf \{l\in N_0: Z_{l+1}\not =Z_{l}\}
$$
and note that all entries $(Z_l)_{l\in\N_0}$ lie in one (random) set $A\in \cP_L$, if  $\{L<\infty\}$ enters, and are identical on $\{L=\infty\}$. 
Hence, for any $k\in\N$
\begin{align*}
 \E[\|Z_0-Z_k\|^p ]& \le \mathfrak{d}^p \,\E[ 2^{-pL}] \le \mathfrak{d}^p \, \sum_{l=0}^\infty  2^{-pl}\, \IP(Z_{l+1}\not =Z_l) \\
&= \mathfrak{d}^p \, \sum_{l=0}^\infty  2^{-pl} \xi_{l+1} (\{(x,y):x\not=y\}) \\
&= \frac12\mathfrak{d}^p \, \sum_{l=0}^\infty  2^{-pl} \sum_{F\in\cP_l} \sum_{C\text{ child of }F}  \Big\lvert\nu(C)-\nu(F) \frac{\mu(C)}{\mu(F)}\Big\rvert, 
\end{align*}
where we used (\ref{eq0301-1}) in the last step, so the assertion follows by \eqref{Grenz}.
\end{proof}

\begin{proof}[ of Proposition \ref{coarse bound new}] We apply the above lemma with 
$\nu = \hat\mu_N$.
Hence,
\begin{align}\label{eq0301-3}
\rho_p^p(\mu,\hat\mu_N) \le  \frac12\mathfrak{d}^p \, \sum_{l=0}^\infty  2^{-pl} \sum_{F\in\cP_l} \sum_{C\text{ child of }F}  
\Big\lvert\hat\mu_N (C)-\hat\mu_N(F) \frac{\mu(C)}{\mu(F)}\Big\rvert.
\end{align}
Note that conditional on the event $\{N \,\hat \mu_N(F)=k\}$ $(k\in \N)$ the random vector $(N\hat \mu_N(C))_{C\text{ child of }F}$ 
is multinomially distributed with parameters $k$ and success probabilities $(\mu(C)/\mu(F))_{C\text{ child of }F} $. Hence,
\begin{align*}
\E\Bigl[\sum_{C\text{ child of } F} &\Bigl|\hat \mu_N(C)-\hat\mu_N(F) \frac{\mu(C)}{\mu(F)}\Bigr| \Big| N\hat\mu_N(F)=k  \Bigr] \\
&=\frac 1N \,\E\Bigl[\sum_{C\text{ child of } F} \Bigl| N \,\hat \mu_N(C)-k\, \frac{\mu(C)}{\mu(F)}\Bigr| \Big| N\hat\mu_N(F)=k  \Bigr]\\
&\le \frac 1N\,\sum_{C\text{ child of } F} \mathrm{var}\bigl(  N \,\hat \mu_N(C)
\bigr| \big| N\hat\mu_N(F)=k  \bigr)^{1/2}\\
&\le  \frac{\sqrt{k}}{N}\sum_{C\text{ child of } F} \sqrt{\frac{\mu(C)}{\mu(F)}} 
\le {2^{\frac{d}{2}}}   \frac{\sqrt k}{N},
\end{align*}
where we used Jensen's inequality in the last step.
We set $\zeta(t)=\sqrt t\wedge t$ ($t\ge0$) and observe that
$$
 \E\Bigl[\sum_{C\text{ child of } F} \Bigl|\hat \mu_N(C)-\hat\mu_N(F) \frac{\mu(C)}{\mu(F)}\Bigr|\Bigr]   \le \frac{2^{\frac{d}{2}}}{N}\,\zeta(N \mu(F)).
$$
Consequently, it follows from  (\ref{eq0301-3}) and Jensen's inequality that
$$
\E[\rho_p^p(\mu,\hat\mu_N)] \le \frac12 \mathfrak{d}^p\sum_{l=0}^\infty2^{-pl} \sum_{F\in\cP_l} \frac{2^{\frac{d}{2}}}{N}\zeta(N \mu(F))
\le\mathfrak{d}^p 2^{\frac{d}{2}-1} N^{-1} \sum_{l=0}^\infty  2^{(d-p)l} \zeta(2^{-dl}N). 
$$
Let $l^{*}:=\lfloor \log_2 N^{\frac{1}{d}}\rfloor$. Then,
\begin{align*}
\E[\rho_p^p(\mu,\hat\mu_N)] &\le \mathfrak{d}^p 2^{\frac{d}{2}-1} N^{-1} \Bigl[  \sum_{l=0}^{l^{*} } 2^{(\frac12 d-p)l} \sqrt N 
+\sum_{l= l^{*}+1}^\infty  2^{-pl} N\Bigr] \\
&\le \mathfrak{d}^p 2^{\frac{d}{2}-1} N^{-1} \Bigl[ \sum_{k=0}^{\infty} 2^{(\frac{d}{2}-p)(l^{*}-k)} \sqrt{N} 
+2^{-p(l^{*}+1)} \sum_{j=0}^\infty  2^{-pj} N \Bigr]  \\
&\le \mathfrak{d}^p 2^{\frac{d}{2}-1} N^{-\frac{p}{d}} \Bigl[ \frac{1}{1-2^{p-\frac{d}{2}}} + \frac{1}{1-2^{-p}} \Bigr],
\end{align*}
so the assertion follows.
\end{proof}

We are now in the position to prove Theorem \ref{thm_pierce}. 
Since all norms on $\R^d$ are equivalent, it suffices to prove the result for the maximum norm $\|.\|_{\mathrm{max}}$.

\begin{Theorem}\label{Pierce}
Let $p\in[1,\frac d2)$ and $q > \frac{pd}{d-{p}}$. One has for any probability measure $\mu$ on $\R^d$ that 
\begin{align}\label{eq2111-1var}
V_{N,p}^{\mathrm{rand}}(\mu) \le \kappa_{p,q}^{\mathrm{Pierce}} \, \left[ \int_{\R^d} \left\|x\right\|^q_{\mathrm{max}}\,\dd\mu(x) \right]^{1/q} \, N^{-1/d},
\end{align}
where 
$\kappa_{p,q}^{\mathrm{Pierce}} = \kappa_p^{\mathrm{cube}} \, 
\left[  \frac{2^{p-1} 2^{\frac{q}{2}} \cd^p}{1-2^{p-\frac{1}{2}q}} + 
\frac{2^{p+q(1-p/d)}(\kappa_p^{\mathrm{cube}})^p}{1- 2^{-q(1-p/d)+p}}\right]^{1/p}$.
\end{Theorem}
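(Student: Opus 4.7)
The plan is to deduce Theorem~\ref{Pierce} from Proposition~\ref{coarse bound new} by a dyadic shell decomposition of $\R^d$, reducing the general case to many scaled applications of the unit-cube estimate. Working with the max-norm and writing $M_q:=\int_{\R^d}\|x\|_{\max}^q\,\dd\mu(x)$, I would first partition $\R^d$ into shells $A_0=[-1,1)^d$ and $A_k=[-2^k,2^k)^d\setminus[-2^{k-1},2^{k-1})^d$ for $k\ge 1$. Each $A_k$ sits inside a cube of side $2^{k+1}$, and for $k\ge 1$ every point of $A_k$ has max-norm at least $2^{k-1}$. Markov's inequality then supplies the shell-mass decay $\alpha_k:=\mu(A_k)\le 2^{-q(k-1)}M_q$. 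Let $N_k$ denote the number of samples $X_j$ lying in $A_k$; conditionally on $(N_k)_k$, the samples inside $A_k$ are i.i.d.\ from $\mu|_{A_k}/\alpha_k$.

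The central device is the auxiliary random measure $\mu'$ defined by $\mu'|_{A_k}:=(N_k/(N\alpha_k))\,\mu|_{A_k}$, so that $\mu'$ carries the same shell weights $N_k/N$ as $\hat\mu_N$ but the same shape as $\mu$ within each shell. Combining Lemma~\ref{wasser_prop}(ii) with $(a+b)^p\le 2^{p-1}(a^p+b^p)$ yields the split
\[
\rho_p^p(\mu,\hat\mu_N)\le 2^{p-1}\bigl[\rho_p^p(\mu,\mu')+\rho_p^p(\mu',\hat\mu_N)\bigr],
\]
and the two summands are handled by different techniques: the first compares shell weights while the second exploits the cube estimate.

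For $\rho_p^p(\mu',\hat\mu_N)$ the shell weights match, so convexity (Lemma~\ref{wasser_prop}(i)) reduces the bound to $\sum_k \rho_p^p(\mu'|_{A_k},\hat\mu_N|_{A_k})$. Scaling the $k$-th shell down to $[0,1)^d$ by Lemma~\ref{wasser_prop}(iii) and applying Proposition~\ref{coarse bound new} conditionally on $N_k$, followed by Jensen's inequality for the concave map $x\mapsto x^{1-p/d}$ (valid since $p<d$), gives a bound of the form $C_{p,d}\,\cd^p 2^{pk}(\kappa_p^{\mathrm{cube}})^p N^{-p/d}\alpha_k^{1-p/d}$. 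Summing over $k$ with the Markov bound on $\alpha_k$ produces a geometric series of ratio $2^{p-q(1-p/d)}$, which converges exactly under the hypothesis $q>pd/(d-p)$; this accounts for the second summand of $\kappa^{\mathrm{Pierce}}_{p,q}$, and after raising to the $1/p$ power the moment term $M_q^{(d-p)/(dp)}$ is absorbed into $M_q^{1/q}$ by Lyapunov's inequality.

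For $\rho_p^p(\mu,\mu')$ the two measures differ only in shell weights, and I would use a ``through the origin'' transport: collapse the excess $\mu-\mu\wedge\mu'$ to $\delta_0$ and then send $\delta_0$ onto the deficit $\mu'-\mu\wedge\mu'$; the concatenation costs at most $2^{p-1}\cd^p\sum_k 2^{pk}|\alpha_k-N_k/N|$. The variance bound $\E|\alpha_k-N_k/N|\le\sqrt{\alpha_k/N}$ combined with the Markov estimate on $\alpha_k$ gives another geometric series, of ratio $2^{p-q/2}$, accounting for the first summand of $\kappa^{\mathrm{Pierce}}_{p,q}$. The main obstacle is precisely this between-shell estimate: the crude ``through the origin'' coupling together with the pure variance bound needs $q>2p$, which is strictly stronger than $q>pd/(d-p)$ when $p<d/2$ (since then $pd/(d-p)<2p$). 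Closing the gap under the sharp moment hypothesis appears to require either a more careful shell-to-shell transport that avoids paying the worst-case diameter, or splitting the sum at the scale where $N\alpha_k\asymp 1$ and invoking the trivial $\E|\alpha_k-N_k/N|\le 2\alpha_k$ on the tail before summing.
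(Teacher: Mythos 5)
Your decomposition is exactly the paper's: dyadic shells $B_0=[-1,1)^d$, $B_n=2^nB\setminus 2^{n-1}B$, the intermediate measure $\nu$ that reweights $\mu$ shell by shell to carry the empirical shell masses, a triangle inequality, the cube proposition plus Jensen and the Markov tail bound for the within-shell term (same geometric ratio $2^{p-q(1-p/d)}$), and an excess-to-deficit transport bounded by $2^{p-1}\sum_n \cd^p 2^{np}\,|\mu-\nu|(B_n)$ for the between-shell term (the paper uses the product coupling $\delta^{-1}(\mu-\nu)_+\otimes(\nu-\mu)_+$ with $\|x-y\|^p\le 2^{p-1}(\|x\|^p+\|y\|^p)$, which is your ``through the origin'' transport in disguise). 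One small correction: the moment factor is not absorbed by Lyapunov --- the exponent $(d-p)/(dp)\ge 1/q$ points the wrong way when $M_q>1$ --- but by homogeneity: both sides of \eqref{eq2111-1var} scale linearly under $x\mapsto\lambda x$, so one may normalize $M_q=1$ at the outset, which is what the paper does.

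The ``obstacle'' you flag for the between-shell term is real, and you should know that it is present in the published proof as well: the paper bounds $\E|\mu(B_n)-\hat\mu_N(B_n)|$ by $N^{-1/2}\mu(B_n)^{1/2}$ throughout and sums the series $\sum_n 2^{n(p-q/2)}$, which converges only for $q>2p$; correspondingly the stated constant contains the factor $(1-2^{p-q/2})^{-1}$, which is negative for $q\in(\frac{pd}{d-p},2p]$. Your second proposed repair is the right one and does close the gap under the sharp hypothesis: split at $n^*$ with $2^{-qn^*}\asymp N^{-1}$, use the variance bound for $n\le n^*$ and the trivial bound $\E|\mu(B_n)-\hat\mu_N(B_n)|\le 2\mu(B_n)\le 2\cdot 2^{-q(n-1)}M_q$ for $n>n^*$ (the latter series converges since $q>p$); both halves are then of order $N^{p/q-1}$, and $p/q-1\le -p/d$ is exactly the condition $q\ge \frac{pd}{d-p}$. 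This yields the theorem with the stated rate for all $q>\frac{pd}{d-p}$, though with a constant different from the one printed. So: same route as the paper, with an honest and correct diagnosis of the one step where the printed argument (and constant) implicitly needs $q>2p$, and a workable fix that you should carry out explicitly rather than leave as an alternative.
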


\begin{proof}
By the scaling invariance of inequality~(\ref{eq2111-1var}), we can and will assume without loss of generality that 
$\int \|x\|_{\mathrm{max}}^q\,\dd\mu(x) =1$. We partition $\R^d$ into a sequence of sets $(B_n)_{n\in\N_0}$ defined as
$$
B_0:=B:=[-1,1)^d \ \text{ and } \ B_n:=(2^n B) \backslash (2^{n-1} B) \text{ for }n\in\N.
$$
We denote by  $\nu$ the random $(B_n)$-approximation of $\mu$ to $\hat\mu_N$, that is
$$
\nu|_{B_n} =\frac{\hat\mu_N(B_n)}{\mu(B_n)} \mu|_{B_n} \ \text{ for } \ n\in\N_0. 
$$
Then $\xi=(\mu\wedge \nu)\circ \psi^{-1}+  \delta^{-1}\, (\mu-\nu)^+\otimes (\nu-\mu)^+$ with $\delta:=|(\mu-\nu)^+|=|(\nu-\mu)^+|$ and 
$\psi:\R^d \to \R^d\times\R^d, x\mapsto (x,x)$ defines a transport in $\cM(\mu,\nu)$, such that
\begin{align*}
\int \|x-y\|^p \, \xi(\dd x,\dd y) &= \delta^{-1}\int_{\R^d} \int_{\R^d} \|x-y\|^p \,  (\mu-\nu)^+ (\dd x)\,  (\nu-\mu)^+(\dd y) \\
&\le 2^{p-1} \int_{\R^d} \|x\|^p\, (\mu-\nu)^+ (\dd x)+ 2^{p-1}  \int_{\R^d} \|y\|^p\, (\nu-\mu)^+ (\dd y) \\
&\le 2^{p-1} \sum_{n=0}^{\infty}\int_{B_n} \|x\|^p\, (\mu-\nu)^+ (\dd x)+ 2^{p-1}  \sum_{n=0}^{\infty}\int_{B_n} \|y\|^p\, (\nu-\mu)^+ (\dd y ) \\
&\le 2^{p-1} \sum_{n=0}^{\infty} \cd^p 2^{np} \cdot \left|\mu-\nu\right|(B_n).
\end{align*}
Note that $N\hat{\mu}_N(B_n)\sim  \mathrm{Bin}(N,\mu(B_n))$ and that by the Markov inequality
\begin{align}\label{ieq2111-2}
\mu(B_n)\leq 2^{-q(n-1)} \int \|x\|_\mathrm{max}^q \,\dd \mu(x)= 2^{-q(n-1)}. 
\end{align}
The inequality remains true for $n=0$. Thus
\begin{equation}\label{ieq1612-1}
\begin{split}
&\hspace{-2pt} \E [\rho^p_p(\mu,\nu)] 
\leq \sum_{n=0}^{\infty} 2^{p-1} 2^{np}\cd^p \,\E \left[ \left|\mu(B_n) - \hat{\mu}_N(B_n)\right| \right] \\
&\leq \sum_{n=0}^{\infty} 2^{p-1}2^{np}\cd^p \, N^{-\frac{1}{2}}  \mu(B_n)^{\frac{1}{2}} \\
&\leq 2^{p+\frac q2-1}  \cd^p  N^{-\frac{1}{2}} \sum_{n=0}^{\infty} 2^{n(p-\frac 12q)}  = \frac{2^{p+\frac{q}{2}-1}}{1-2^{p-\frac{1}{2}q}}  \cd^p  
N^{-\frac{1}{2}}.
\end{split}
\end{equation}

It remains to analyse $\E [\rho^p_p(\nu,\hat\mu_N)]$. Given that $\{N\hat{\mu}_{N}(B_n)=k\}$ the random measure 
$\frac Nk\hat\mu_{N}|_{B_n}$ is the empirical measure of $k$ independent $\frac{\mu|_{B_n}}{\mu(B_n)}$-distributed 
random variables. By Lemma \ref{wasser_prop} (i) and Proposition~\ref{coarse bound new},
\begin{align*}
\E[ \rho_p^p(\nu,\hat\mu_N) ]&\le  \sum_{n=0}^{\infty} \E[\rho_p^p(\nu|_{B_n},\hat\mu_N|_{B_n}) ]\\
&\le  \sum_{n=0}^{\infty} \sum_{k=1}^\infty \IP(N \hat{\mu}_{N}(B_n)=k)\, 2^{(n+1)p}\, \frac kN (\kappa_p^{\mathrm{cube}})^p k^{-p/d}.
\end{align*}
Using that $\E \left[\hat{\mu}_{N}(B_n)\right]= \mu(B_n)$, we conclude with  Jensen's inequality that
$$
\E[ \rho_p^p(\nu,\hat\mu_N) ]\le  (\kappa_p^{\mathrm{cube}})^p N^{-p/d} \sum_{n=0}^{\infty} 2^{(n+1)p}\, \mu(B_n)^{1-p/d}.
$$
We use again inequality (\ref{ieq2111-2}) to derive 
\begin{align*}
\E[ \rho_p^p(\nu,\hat\mu_N) ] &\le  (\kappa_p^{\mathrm{cube}})^p N^{-p/d} \sum_{n=0}^\infty 2^{(n+1)p-q(n-1)(1-\frac pd)} \\
&= (\kappa_p^{\mathrm{cube}})^p  \frac{2^{p+q(1-p/d)}}{1- 2^{-q(1-p/d)+p}} \,  N^{-p/d} .
\end{align*}
Note that $\frac pd\le \frac12$ and altogether, we finish the proof by applying the triangle inequality (property (ii) of 
Lemma~\ref{wasser_prop}) and equation (\ref{ieq1612-1}) to deduce that
$$
\E[\rho^p_p(\mu,\hat\mu_N)]^{1/p} 
\le \underbrace{\left[  \frac{2^{p-1} 2^{\frac{q}{2}} \cd^p}{1-2^{p-\frac{1}{2}q}} + 
\frac{2^{p+q(1-p/d)}(\kappa_p^{\mathrm{cube}})^p}{1- 2^{-q(1-p/d)+p}}\right]^{1/p}}_{=:\kappa_{p,q}^{\mathrm{Pierce}}} \, N^{-1/d}.
$$
\end{proof}

\section{Asymptotic analysis of the uniform measure}\label{sec_uni}

Next, we investigate the asymptotics of the random quantization of the uniform distribution $\cU$ on the unit cube $B=[0,1)^d$. 
The aim of this subsection is to prove the existence of the limits 
$$ \kappa_p^{\mathrm{unif}}:=\lim_{N\to\infty} N^{1/d} \, V^{\mathrm{rand}}_{N,p}(\cU),\qquad 
\underline{\kappa}_p^{\mathrm{unif}}:=\lim_{N\to\infty} N^{1/d} \, \underline{V}^{\mathrm{rand}}_{N,p}$$
which is the first statement of Theorem~\ref{thm2}.

\begin{Notation}
Let $A$ and  $S$ denote two sets with  $A\subset S$ and suppose that  $v=(v_j)_{j=1,\dots,N}$ is an $S$-valued vector. We call the 
vector $v_A$ consisting of all entries of $v$ in $A$ the $A$-\emph{subvector} of $v$, that is
$$v_A:=(v_{\gamma(j)})$$ where $(\gamma(j))$ is an enumeration of the entries of $v$ in $A$.
\end{Notation}

For a Borel set $A$ with finite nonvanishing Lebesgue measure, we denote by $\cU(A)$ the uniform distribution on $A$.  The proof of the 
existence of the limit makes use of the following lemma.

\begin{Lemma}\label{divide}
Let $K\in\N$ and let $A,A_1,\dots , A_K\subset\R^d$ be Borel sets such that $\lambda^d(A)<\infty$ and that the sets  $A_1,\dots,A_K\subset\R^d$ 
are pairwise disjoint and cover $A$. 
Fix $N\in\N$ and suppose that $\xi_k:=N\cdot\frac{\lambda^d(A_k\cap A)}{\lambda^d(A)}\in\N_0$ for  $k=1,\dots,K$. 

Assume that $X=\left(X_1,\dots,X_N\right)$ is a random vector consisting of independent  $\mathcal{U}(A)$-distributed entries. Then one can 
couple $X$ with  a random vector $Y=(Y_1,\dots,Y_N)$  which has  $A_k$-subvectors consisting of $\xi_k$ independent $\cU(A_k)$-distributed 
entries such that the individual subvectors are  independent  and such that
\begin{equation}
\E\Bigl[ \sum_{j=1}^{N} \ind_{\{X_j\neq Y_j\}}\Bigr] \leq \frac{\sqrt{K}\sqrt N}{2}.
\end{equation}
\end{Lemma}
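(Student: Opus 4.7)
My plan is to build $Y$ by altering $X$ only at a small set of ``excess'' positions. For each $k$, let $I_k := \{j : X_j \in A_k\}$ and $N_k := |I_k|$, so that $N_k \sim \mathrm{Bin}(N, p_k)$ with $p_k := \lambda^d(A_k \cap A)/\lambda^d(A) = \xi_k/N$. Classes split into two types: ``over-full'' ones with $N_k > \xi_k$ contribute $E_k := N_k - \xi_k$ excess points, and ``under-full'' ones with $N_k < \xi_k$ demand $D_k := \xi_k - N_k$ additional points. Since $\sum_k N_k = N = \sum_k \xi_k$, the total excess $E := \sum_k E_k$ equals the total deficit.

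The coupling I propose is the following. For each over-full class, select a uniformly random subset $J_k \subset I_k$ of size $\xi_k$ and set $Y_j := X_j$ for $j \in J_k$; for each under-full class, set $J_k := I_k$. Then fix any bijection $\sigma$ from the excess set $\mathcal{E} := \bigcup_k (I_k \setminus J_k)$ to the multiset of deficit labels in which class $k$ appears $D_k$ times. For each $j \in \mathcal{E}$ with $\sigma(j) = k'$, let $Y_j$ be a fresh independent $\cU(A_{k'})$ sample drawn from auxiliary randomness.

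To confirm that $Y$ has the required distribution, I will condition on the random structure $\mathcal{S} := ((N_k), (J_k), \sigma)$. Given $\mathcal{S}$, the $A_k$-subvector $Y_{A_k}$ has $\xi_k$ entries: the kept $(X_j)_{j \in J_k}$, which are conditionally i.i.d.\ $\cU(A_k)$ since each $X_j$ given $X_j \in A_k$ is $\cU(A_k)$, and the $D_k$ fresh $\cU(A_k)$ samples. Different subvectors use disjoint $X$-entries and independent fresh samples, hence are conditionally independent. Since this conditional joint law does not depend on $\mathcal{S}$, it coincides with the unconditional law, giving the required independence and marginals.

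For the quantitative bound, $X_j$ and $Y_j$ differ precisely on $\mathcal{E}$ (a fresh sample in $A_{k'}$ cannot equal $X_j \in A_k$ when $k' \neq k$), so $\sum_j \ind_{\{X_j \neq Y_j\}} = E = \tfrac12 \sum_k |N_k - \xi_k|$ almost surely. Jensen's inequality and the binomial variance yield $\E[|N_k - \xi_k|] \leq \sqrt{\mathrm{Var}(N_k)} \leq \sqrt{Np_k} = \sqrt{\xi_k}$, and Cauchy--Schwarz gives $\sum_k \sqrt{\xi_k} \leq \sqrt{KN}$, producing the claimed estimate. The main obstacle will be the distributional verification in the third paragraph: the structure $\mathcal{S}$ is random and a priori correlates the subvectors, and one must argue carefully that conditioning on $\mathcal{S}$ strips away exactly those correlations while preserving the correct within-class distributions.
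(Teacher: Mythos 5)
Your proof is correct and follows essentially the same route as the paper's: keep entries of over-full classes up to their quota, fill deficits with fresh uniform samples, note that the number of altered positions is $\tfrac12\sum_k|N_k-\xi_k|$, and bound this via the binomial variance and Cauchy--Schwarz. The only differences are cosmetic (a uniformly random kept subset instead of ``the first $\xi_k$ entries,'' and a more explicit conditioning argument for the distributional claim that the paper dispatches with ``clearly'').
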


\begin{proof}
For $k=1,\dots,K$, denote by $X^{(k)}$ the $A_k$-subvector of $X$. 
For each $k$ with $\xi_k\le \mathrm{length}(X^{(k)})$, we keep the first $\xi_k$ entries of $X$ in $A_k$ and erase the remaining ones. 
For any other $k$'s, we fill up $\xi_k- \mathrm{length}(X^{(k)})$ of the empty places by independent $\cU(A_k)$-distributed elements. 
Denoting the new vector by $Y$, we see that $Y$ has $A_k$-subvectors of length $\xi_k$. 
Clearly, $Y$ has independent subvectors that are uniformly distributed on the respective sets. 
Since the length of the $A_k$-subvector is binomially distributed with parameters $N$ and 
$q_k:=\frac{\lambda^d(A_i\cap A)}{\lambda^d(A)}$, we get 
\begin{align*}
 \E\Bigl[ \sum_{j=1}^{N} \ind_{\{X_j\neq Y_j\}}\Bigr]  &= \frac{1}{2}\E\Bigl[ \sum_{k=1}^{K} \bigl|\mathrm{length}(X^{(k)}) -\xi_k\bigr|\Bigr]  
\leq \frac{1}{2}\sum_{i=1}^{K} \textrm{var}\bigl(\mathrm{length}({X^{(k)}})\bigr)^{1/2} \\
&   \leq \frac{1}{2}\sqrt{N}\sum_{k=1}^{K} \sqrt{q_k}  \leq \frac{1}{2}\sqrt{K}\sqrt{N}.
\end{align*}
\end{proof}

\begin{proof}[ of the first statement of (i) of Theorem \ref{thm2}]
Let $M\in\N$ be arbitrary but fixed. Further, let $N\in\N$, $N>2^dM$, and denote by $B_0=\left[0,a\right)^d$, $a^d=\frac{M}{N}$, 
the cube with volume $\lambda^d\left(B_0\right)=\frac{M}{N}$. We divide $\left[0,1\right)^d$ into two parts, the main one 
$B^{\mathrm{main}}:=\left[0,\left\lfloor 1/a\right\rfloor a\right)^d$ and the remainder 
$B^\mathrm{rem}:=\left[0,1\right)^d\backslash B^\mathrm{main}$. Note that $\lambda^d(B^\mathrm{rem}) \to 0$ as $N\to\infty$. 
We represent $B^\mathrm{main}$ as the  union of $n=\lfloor a^{-1} \rfloor^d$ pairwise disjoint translates of 
$B_0$: $$B^\mathrm{main}=\cup_{k=1}^{n} B_k.$$

Let $X=(X_1,\dots,X_N)$  denote a vector of $N$ independent $\mathcal{U}[0,1)^d$-distributed entries. We shall now couple $X$ with 
 a random vector $Y=(Y_1,\dots,Y_N)$ in such a way that most of the entries of $X$ and $Y$ coincide and such that  the $B_k$-subvectors 
are independent and consist of $M$ independent $\cU(B_k)$-distributed entries.
To achieve this goal we successively  apply Lemma \ref{divide} to construct  random vectors $X^0,\dots ,X^L$ and finally set $X^{L}=Y$. 
First we apply the coupling for $X$ with the decomposition $[0,1)^d= B^\mathrm{main}\, \dot \cup\,  B^\mathrm{rem}$
and denote by $X^0$ the resulting vector. In the next step a  $2^d$ary tree $\cT$  with leaves being the boxes $B_1,\dots B_n$ is used 
to define further couplings. We let $L$ denote the smallest integer with $2^L B_0\supset B^\mathrm{main}$, i.e.\ $L=\lceil -\log_2 a\rceil$,
and set
$$\cT_l:=\{  \gamma  + 2^{L-l} B_0  : \gamma\in (2^{L-l}a \Z^d )\cap B^\mathrm{main}\}$$
for $l=0,\dots,L$. Now $\cT$ is defined as the rooted tree which has at level $l$ the boxes (vertices) $\cT_l$ and a box 
$A_\mathrm{child}\in\cT_l$ is the child of a box $A_\mathrm{parent}\in\cT_{l-1}$ if $A_\mathrm{child}\subset A_\mathrm{parent}$.

We associate the vector $X^0$ with the $0$th level of the tree. Now we define consecutively $X^1,\dots,X^L$ via the following rule. 
Suppose that $X^l$ has already been defined. For each $A\in \cT_l$ we apply the above coupling independently to the $A$-subvector of 
$X^l$  with the representation
$$
A= \dot\bigcup_{B\text{ child of  }A} B.
$$
By induction, for each $A\in\cT_l$, the $A$-subvector of $X^l$ consists of  $N\lambda^d(A)\in\N$ independent $\mathcal{U}(A)$-distributed 
random variables. In particular, this is valid for the last level $Y=X^L$.

We proceed with an error analysis.  Fix $\om\in\Om$ and $j\in\{1,\dots,N\}$ and suppose that $X^0_j(\om),\dots, X^L_j(\om)$ is altered in 
the step $l\to l+1$ for the first time and that $X^0_j(\om) \in B\in \cT_l$. Then it follows that $X^L_j(\om)\in B$ so that
$$
\left\|X^0_j(\om)- X^L_j(\om)\right\|\le \mathrm{diameter}(B) \le  a \cd  2^{L-l},
$$ 
where $\cd$ is the diameter of $[0,1)^d$.
Consequently,
\begin{align*}
\E \Bigr[\sum_{j=1}^N \left\|X^0_j -X^L_j\right\|^p\Bigr] \le \E \Bigl[\sum_{j=1}^N  \sum_{l=0}^{L-1} \ind_{\{X^l_j\not = X^{l+1}_j\}} (a \cd  2^{L-l})^p  \Bigr].
\end{align*}
By Lemma \ref{divide} and the Cauchy-Schwarz inequality, one has, for $l=1,\dots,L$,
\begin{align*}
\E \Bigl[\sum_{j=1}^N  \ind_{\{X^l_j\not = X^{l-1}_j\}} \Bigr] \le  \frac{1}{2} \sqrt{2^d} \sqrt N \sum_{A\in\cT_{l-1}} \sqrt{ \lambda^d(A)}  
\le \frac12 \,2^{dl/2} \sqrt N.
\end{align*}
Together with the former estimate we get
\begin{align*}
 \E \Bigl[\sum_{j=1}^N \lVert X^0_j -X^L_j\rVert^p\Bigr] & \le \frac12 \,(a\cd)^p \sqrt N \sum_{l=1}^{L} 2^{(L-l)p+dl/2}
 \leq \frac12 \frac{(a\cd)^p}{1-2^{-\frac d2+p}}\,2^{dL/2} \sqrt{N} 
\end{align*}
Next, we use that $a=(\frac MN)^{1/d}$ and $2^L\le \frac2a$ to conclude that
$$
 \E \Bigl[\sum_{j=1}^N \lVert X^0_j -X^L_j\rVert^p\Bigr] \le \frac{2^{d/2-1} \cd^p}{1-2^{-d/2+p}} \,M^{\frac{p}{d} - \frac{1}{2}} \, N^{1 - \frac{p}{d}}.
$$
Hence, there exists a constant $C$ that does not depend on $N$ and $M$ such that
\begin{align}\begin{split}\label{eq2812-1}
\E \Bigl[\frac 1N \sum_{j=1}^N \lVert X_j -Y_j\rVert^p\Bigr]^{1/p} &\le \E \Bigl[\frac 1N \sum_{j=1}^N \lVert X_j-X^0_j\rVert^p\Bigr]^{1/p} + \E \Bigl[\frac 1N \sum_{j=1}^N \lVert X^0 _j -X^L_j\rVert^p\Bigr]^{1/p}\\
&\le C \,  \bigl[ N^{-\frac1{2p}}+M^{-(\frac1{2p}-\frac 1d)} \, N^{-\frac 1d}  \bigr].
\end{split}\end{align}

By construction, $Y$ has for each $k=1,\dots,n$, a $B_k$-subvector of $M$ independent $\cU(B_k)$-distributed random 
variables and we denote the corresponding empirical measure by $\hat \mu^{(k)}_M$. Morever, its $B^\mathrm{rem}$-subvector contains 
$N-nM$ independent $\cU(B^\mathrm{rem})$-distributed entries and we denote its empirical measure by $\hat \mu^\mathrm{rem}_{N-nM}$.
Letting $\hat \mu^Y_N$ denote the empirical measure of $Y$, we conclude with Lemma \ref{wasser_prop} and Proposition \ref{coarse bound new} that  
\begin{align}\label{eq2812-neu}
N\, \E[\rho_p^p(\hat \mu^Y_N, \cU)] &\le \sum_{k=1}^n M\,  \E[\rho_p^p(\hat \mu^{(k)}_M, \cU(B_k))] + (N-nM) \, 
\E[\rho_p^p(\hat \mu^{\mathrm{rem}}_{N-nM}, \cU(B^\mathrm{rem}))]\nonumber \\
&\le n M a^p (V^\mathrm{rand}_{M,p}(\cU))^p + (\kappa^\mathrm{cube}_p )^p (N-nM)^{1-p/d}.
\end{align}

Next, we let $N$ tend to infinity and combine the above estimates. Note that $N^{1/d} a \to M^{1/d}$ and 
$\frac{nM}{N}\to 1$ so that
$$
\limsup_{N\to \infty} N^{1/d} \,  \E[\rho^p_p(\hat \mu^Y_N, \cU)]^{1/p} \le M^{1/d} V^\mathrm{rand}_{M,p} (\cU).
$$ 
Moreover, (\ref{eq2812-1}) implies that
$$
\limsup_{N\to\infty} N^{1/d} \,  \E[\rho^p_p(\hat \mu^X_N, \hat \mu^Y_N)]^{1/p} \le C \,M^{-(\frac 1{2p}-\frac1d)}.
$$
Now fix $\eps\in(0,1]$ arbitrarily and let $M\ge \frac 1\eps$ such that
$$
M^{1/d}\, V^{\mathrm{rand}}_{M,p} (\cU)\le \liminf_{N\to\infty} N^{1/d}\,  V^{\mathrm{rand}}_{N,p} (\cU)+\eps.
$$
Then 
\begin{align*}
\limsup_{N\to\infty} N^{1/d}\,V^{\mathrm{rand}}_{N,p}(\cU) &\le  M^{1/d} V^\mathrm{rand}_{M,p}(\cU)+ C \,M^{-(\frac 1{2p}-\frac1d)}\\ 
&\le \liminf_{N\to\infty} N^{1/d}\,  V^{\mathrm{rand}}_{N,p}(\cU)+\eps +C \, \eps ^{\frac 1{2p}-\frac1d}
\end{align*}
and letting $\eps\downarrow 0$ finishes the proof.
\end{proof}

\begin{proof}[ of the second statement of (i) of Theorem \ref{thm2}]
The proof of the second statement is very similar to the proof of the first statement. The crucial difference  is that the arguments are now based on superadditivity compared to the subadditivity of the Wasserstein metric (in the sense of part (i) of Lemma 
\ref{wasser_prop}) that was used in the proof of the first statement.

 We now look at a nonsymmetric  modified version of the Wasserstein distance that allows leakage at the boundaries. For two probability measures $\nu_1$ and $\nu_2$ on $[0,1]^d$, we define
$$
\underline{\rho}_{p}(\nu_1,\nu_2):=\inf_{\nu_1'\in \Lambda(\nu_1)} \rho_p(\nu_1',\nu_2),
$$
where $\Lambda(\nu_1)$ denotes all probability measures $\zeta$ on  $[0,1]^d$ 
which satisfy $\zeta(A) \le \nu_1(A)$ for all Borel sets $A$ in $(0,1)^d$. 

We make use of thee same notation as in the proof of the first statement. First note that similar as in (\ref{eq2812-neu})
$$
N \,\E[\underline{\rho}_{p}^p(\mathcal U,\hat\mu^Y_N)]\geq nM a^p (\underline{V}^\mathrm{rand}_{M,p})^p
$$
Since, in general,
$$
\underline{\rho}_{p}(\mathcal U,\hat\mu^Y_N)
\leq \underline{\rho}_{p}(\mathcal U,\hat\mu^X_N)+ \rho_p(\hat \mu^X_N,\hat\mu_N^Y),
$$
we conclude that
\begin{align*}
\liminf_{N\to\infty}N^{1/d}  \E[\underline{\rho}^p_{p}(\mathcal U,\hat\mu^X_N)]^{1/p}&\geq \liminf_{N\to\infty}N^{1/d}  \E[\underline{\rho}^p_{p}(\mathcal U,\hat\mu^Y_N)]^{1/p}-\limsup_{N\to\infty}N^{1/d}  \E[{\rho}^p_{p}(\hat\mu^X_N,\hat\mu^Y_N)]^{1/p}  \\
&\geq \liminf_{N\to\infty} N^{1/d} (nM/N  )^{1/p} a \underline{V}_{M,p}^\mathrm{rand}(\cU) -C \,M^{-(\frac 1{2p}-\frac1d)}\\
&\geq M^{1/d} \,\underline{V}_{M,p}^\mathrm{rand}(\cU) -C \,M^{-(\frac 1{2p}-\frac1d)}.
\end{align*}
The proof is finished as above.
%
%
\end{proof}

\section{Proof of the high resolution formula}\label{sec_hrf}
\subsection{Proof  of the high resolution formula for general $p$}

\begin{Definition}
We call a finite measure $\mu$ on $\R^d$ \textit{approachable from below}, if there exists for any $\eps>0$ a finite number of 
cubes $B_1,\dots,B_n$ (which are parallel to the coordinate axes) and positive reals $\alpha_1,\dots,\alpha_n$ such that 
$\nu:=\sum \alpha _k \,\cU(B_k)$ satisfies
$$
 \nu\le \mu \text { and } \|\mu-\nu\|\le \eps.
$$
The term \textit{approachable from above} is defined analogously.
\end{Definition}

\begin{Remark}
Since we can express a measure which is approachable from below or above as the limit of a sequence of measures with Lebesgue density, 
it has itself a Lebesgue density. Conversely, any finite measure which has a density which is Riemann integrable on any cube, 
is approachable from below and above.
\end{Remark}

\begin{Proposition}\label{prop2912-1}
Let $\mu$ denote a compactly supported probability measure that is approachable from below. Further let $p\in\left[1,d/2\right)$. Then 
$$
\limsup_{N\to\infty} N^{1/d}  \, \E[\rho_p^p(\mu,\hat\mu_N)]^{1/p} \le \kappa_p^{\mathrm{unif}} \, \left(\int_{\R^{d}} 
\left(\frac{\dd \mu}{\dd \lambda^d}\right)^{1-\frac{p}{d}}\dd \lambda^d\right)^{1/p}.
$$
\end{Proposition}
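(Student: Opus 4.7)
The plan is to sandwich $\mu$ between a tractable piecewise-uniform approximation from below and the empirical measure $\hat\mu_N$, and transfer the asymptotics from Theorem~\ref{thm2}(i) cube-by-cube. Fix $\eps > 0$. By approachability from below, after refining the covering cubes to a finer disjoint sub-grid if necessary (losing at most an additional $\eps$ of mass), choose $\nu = \sum_{k=1}^n \alpha_k \cU(B_k)$ with pairwise disjoint cubes $B_k$ of volume $v_k = s_k^d$, satisfying $\nu \le \mu$ and $\gamma := \|\mu - \nu\| \le \eps$; write $\rho := \mu - \nu$. I would couple the $N$ iid $\mu$-samples with labels $L_j \in \{0,1,\dots,n\}$ by thinning: given $X_j$, set $L_j = k \ge 1$ with conditional probability $(\alpha_k/v_k)\ind_{B_k}(X_j)/(\dd\mu/\dd\lambda^d)(X_j)$ (mutually exclusive and summing to $\le 1$ because $\nu \le \mu$), and $L_j = 0$ otherwise. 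Then $N_k := \#\{j : L_j = k\}$ is jointly multinomial with parameters $(N; \alpha_1,\dots,\alpha_n,\gamma)$, and conditionally on the labels the label-$k$ samples are iid $\cU(B_k)$ (for $k\ge 1$) while the label-$0$ samples are iid with law $\rho/\gamma$.

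Introduce the auxiliary random measure $\tilde\mu := \sum_{k=1}^n \tfrac{N_k}{N}\cU(B_k) + \tfrac{N_0}{N\gamma}\rho$, which shares the building blocks of $\mu$ but with masses adjusted to match the actual counts in $\hat\mu_N$. The triangle inequality splits
\[
\E[\rho_p^p(\mu,\hat\mu_N)]^{1/p} \le \E[\rho_p^p(\mu,\tilde\mu)]^{1/p} + \E[\rho_p^p(\tilde\mu,\hat\mu_N)]^{1/p}.
\]
For the first summand, $\mu-\tilde\mu$ is a signed measure supported in a set of diameter $D$ (as $\mu$ is compactly supported) with total variation at most $\sum_k |\alpha_k - N_k/N| + |\gamma - N_0/N|$, so bulk-transporting the imbalance at cost $D^p$ per unit mass gives $\E[\rho_p^p(\mu,\tilde\mu)] = O((n+1)N^{-1/2})$ via Cauchy--Schwarz on the multinomial standard deviation. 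Since $2p < d$, this implies $N^{1/d}\E[\rho_p^p(\mu,\tilde\mu)]^{1/p} = O(N^{1/d - 1/(2p)}) \to 0$, so this summand is negligible.

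For the second summand, the convexity property (Lemma~\ref{wasser_prop}(i)) together with the homogeneity $\rho_p^p(c\mu_1, c\mu_2) = c\,\rho_p^p(\mu_1,\mu_2)$ yields
\[
\rho_p^p(\tilde\mu,\hat\mu_N) \le \sum_{k=1}^n \tfrac{N_k}{N}\rho_p^p(\cU(B_k),\hat\mu^{(k)}) + \tfrac{N_0}{N}\rho_p^p(\rho/\gamma,\hat\mu^{(0)}),
\]
where $\hat\mu^{(k)}$ denotes the empirical measure of the label-$k$ samples. Conditioning on $N_k$ and applying the scaling property of $\rho_p$, the $k$-th cube summand has expectation $\E[\tfrac{N_k}{N} s_k^p (V_{N_k,p}^{\mathrm{rand}}(\cU))^p]$. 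Theorem~\ref{thm2}(i) gives $N_k^{p/d}(V_{N_k,p}^{\mathrm{rand}}(\cU))^p \to (\kappa_p^{\mathrm{unif}})^p$ a.s.\ (since $N_k \to \infty$ whenever $\alpha_k > 0$, by the law of large numbers), while Proposition~\ref{coarse bound new} supplies the uniform domination $(V_{N_k,p}^{\mathrm{rand}}(\cU))^p \le (\kappa_p^{\mathrm{cube}})^p N_k^{-p/d}$; these combine in dominated convergence to deliver $N^{p/d}\E[\tfrac{N_k}{N}\rho_p^p(\cU(B_k),\hat\mu^{(k)})] \to (\kappa_p^{\mathrm{unif}})^p \alpha_k^{1-p/d} s_k^p$. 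The residual summand is handled by Theorem~\ref{thm_pierce} applied to the compactly supported probability $\rho/\gamma$: conditionally on $N_0$ its contribution is at most $C N_0^{-p/d}$, and Jensen's inequality yields $\E[\tfrac{N_0}{N}N_0^{-p/d}] = N^{-1}\E[N_0^{1-p/d}] \le N^{-p/d}\gamma^{1-p/d} \le N^{-p/d}\eps^{1-p/d}$.

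Combining and using the identity $\alpha_k^{1-p/d}s_k^p = \int_{B_k}(\dd\nu/\dd\lambda^d)^{1-p/d}\,\dd\lambda^d$ together with $\dd\nu/\dd\lambda^d \le \dd\mu/\dd\lambda^d$ a.e.\ and $1-p/d > 0$, one would conclude
\[
\limsup_{N\to\infty} N^{1/d}\,\E[\rho_p^p(\mu,\hat\mu_N)]^{1/p} \le \kappa_p^{\mathrm{unif}}\Bigl(\int(\dd\mu/\dd\lambda^d)^{1-p/d}\,\dd\lambda^d + C\eps^{1-p/d}\Bigr)^{1/p},
\]
and letting $\eps \downarrow 0$ finishes the proof. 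The main technical obstacle is the uniform-in-$N_k$ control needed to move the pointwise limit $N_k^{1/d}V_{N_k,p}^{\mathrm{rand}}(\cU) \to \kappa_p^{\mathrm{unif}}$ inside the expectation against the random counts: Proposition~\ref{coarse bound new} is precisely the dominating bound that legitimises the dominated convergence step, and compactness of the support is what allows Pierce to absorb the $\rho$-part cheaply.
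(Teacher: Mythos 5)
Your proposal is correct and follows essentially the same route as the paper: the same piecewise-uniform minorant, the same multinomial/thinning representation of $\hat\mu_N$, the same intermediate measure with random weights handled at cost $\cO(N^{-1/(2p)})$, the same cube-by-cube transfer of Theorem~\ref{thm2}(i) via Lemma~\ref{wasser_prop}(i), and the same density comparison $\alpha_k^{1-p/d}s_k^p\le\int_{B_k}f^{1-p/d}\,\dd\lambda^d$. The only cosmetic differences are that you pass the limit through the expectation by dominated convergence (with Proposition~\ref{coarse bound new} as the dominating bound), where the paper uses a concave majorant of $n\mapsto\E[n\,\rho_p^p(\cU,\hat\cU_n)]$ together with Jensen's inequality, and that you invoke Theorem~\ref{thm_pierce} for the remainder where the paper uses Proposition~\ref{coarse bound new} directly via the compact support; both variants are sound.
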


\begin{proof} Let $\eps>0$ 
and choose a finite number of  pairwise disjoint cubes  $B_1,\dots,B_K$ and positive reals $\alpha_1,\dots,\alpha_K$ such that 
$\mu^*:=\sum_{k=1}^K \alpha _k \cU(B_k)\le \mu$ and  $\|\mu-\mu^*\|\le \eps$. For $k=1,\dots,K$ let  $\mu^{(k)}=\cU(B_k)$, 
set $\alpha_0 =\|\mu-\mu^*\|$ and fix a probability measure  $\mu^{(0)}$  such that
$$
\mu= \sum_{k=0}^K \alpha_k \mu^{(k)}.
$$
For each $k$, we consider empirical measures $(\hat\mu^{(k)}_n)_{n\in\N}$ of a sequence of independent $\mu^{(k)}$-distributed 
random variables. We assume independence of the individual empirical measures and observe that for an additional independent 
multinomial random variable $M=(M_k)_{k=0,\dots,K}$ with parameters $N$ and $(\alpha_k)_{k=0,\dots,K}$ one has
$$
N\,\hat \mu_N\stackrel{\mathcal{L}}= \sum_{k=0}^K M_k\, \hat \mu^{(k)}_{M_k}.
$$
We assume without loss of generality strict equality in the last equation.
Set $\nu=\sum_{k=0}^K \frac{M_k}{N} \mu^{(k)}$ and observe that by the triangle inequality
$$
\E[\rho_p^p (\mu,\hat\mu_N)]^{1/p} \le  \E[\rho_p^p (\mu,\nu)]^{1/p} + \E[\rho_p^p (\nu,\hat \mu_N)]^{1/p}.
$$
The first expression on the right hand side is of order $\cO(N^{-1/2p})$, (see proof of Proposition \ref{Pierce}). By 
Theorem \ref{thm2} (i), there is  a concave function $\varphi:[0,\infty)\to \R$ such that 
$\E [n\, \rho_p^p (\cU([0,1)^d),\widehat{\cU([0,1)^d)}_n)]\le \varphi(n)$ for all $n\in\N_0$ and
$$
\lim_{n\to\infty} \frac1{n^{1-p/d}} \,\varphi(n) = (\kappa_p^{\mathrm{unif}})^p.
$$
Denote by $a_1,...,a_K$ the edge lengths of the cubes $B_1,...,B_K$ and let $a_0>0$ be such that the support of $\mu$ is contained in 
a cube with side length $a_0$. Then, by Lemma \ref{wasser_prop} and Jensen's inequality,
\begin{align*}
N \,\E[\rho_p^p (\nu,\hat \mu_N)]&\le \sum_{k=0}^K \E[M_k\, \rho_p^p (\mu^{(k)},\hat \mu^{(k)}_{M_k})]\\
&\le (\kappa_p^{\mathrm{cube}})^p\,a_0^p \, \E [M_0^{1-p/d}]+ \sum_{k=1}^K a_k^p \,  \E [\varphi(M_k)]\\
&\le (\kappa_p^{\mathrm{cube}})^p\,a_0^p \, (\alpha_0N)^{1-p/d}+ \sum_{k=1}^K a_k^p \,  \varphi(\alpha_k N),
\end{align*}
so that
$$
\limsup_{N\to\infty} N^{p/d}\,\E[\rho_p^p (\nu,\hat \mu_N)]\le  (\kappa_p^{\mathrm{cube}})^p\,a_0^p \, \eps^{1-p/d} + 
(\kappa_p^{\mathrm{unif}})^p \sum_{k=1}^K a_k^p \,  \alpha_k^{1-p/d}.
$$
Note that for $x\in B_k$, $f(x):=\frac{\dd \mu_a}{\dd \lambda^d}\ge \alpha_k/a_k^d$ and we get
$$
 a_k^p \,  \alpha_k^{1-p/d} =\int_{B_k} a_k^{p-d} \alpha_k^{1-p/d}\, \dd x\le \int_{B_k}f(x)^{1-p/d}\, \dd x.
$$
Finally, we arrive at
$$
\limsup_{N\to\infty} N^{p/d}\,\E[\rho_p^p (\mu,\hat \mu_N)]\le  (\kappa_p^{\mathrm{unif}})^p \int_{\R^d} f(x)^{1-p/d}\,\dd x+  
(\kappa_p^{\mathrm{cube}})^p\,a_0^p \, \eps^{1-p/d}.
$$
Letting $\eps \to 0$ the assertion follows.
\end{proof}

\begin{Proposition}\label{prop2912-2}
Let $\mu$ be a finite singular measure on the Borel sets of $[0,1)^d$. For $p\in[1,d/2)$, one has
$$
\lim_{N\to\infty} N^{1/d} \,V^\mathrm{rand}_{N,p}(\mu) =0.
$$
\end{Proposition}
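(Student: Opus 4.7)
The plan is to exploit the singularity $\mu\perp\lambda^d$ through an $\eps$-argument: the mass of $\mu$ can be essentially confined to a region of arbitrarily small Lebesgue volume, and I would show that this smallness transfers to the random-quantization error when Proposition~\ref{coarse bound new} is applied piece by piece. The guiding observation is that the coarse bound scales linearly with cube-side lengths (via Lemma~\ref{wasser_prop}(iii)), so packing $\mu$ into very thin cubes should produce a small prefactor.

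Fix $\eps>0$. By outer regularity of $\lambda^d$ combined with singularity of $\mu$, I would find an open $U\subset[0,1)^d$ with $\lambda^d(U)\le\eps$ and $\mu(U)=\mu([0,1)^d)$, then select finitely many pairwise disjoint dyadic cubes $D_1,\dots,D_L\subset U$ of sides $a_1,\dots,a_L$ with $\sum_i a_i^d\le\eps$ and $\alpha_1:=\mu(\bigcup_i D_i)\ge\mu([0,1)^d)-\eps$. After normalising to a probability measure, I decompose $\mu=\alpha_1\mu_1+\alpha_2\mu_2$ with $\mu_1$ the normalised restriction to $\bigcup_i D_i$ (and $\alpha_2\le\eps$), and split $N\hat\mu_N=M_1\hat\mu^{(1)}_{M_1}+M_2\hat\mu^{(2)}_{M_2}$ with $(M_1,M_2)\sim\mathrm{Mult}(N,(\alpha_1,\alpha_2))$. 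Introducing the intermediate measure $\nu:=\frac{M_1}{N}\mu_1+\frac{M_2}{N}\mu_2$ and combining the triangle inequality, convexity (Lemma~\ref{wasser_prop}), and the crude bound $\rho_p^p(\mu,\nu)\le\cd^p|M_1/N-\alpha_1|$ (transport across the diameter of $[0,1)^d$), the analysis reduces to the two terms $\tfrac{M_k}{N}\rho_p^p(\mu_k,\hat\mu^{(k)}_{M_k})$ plus a total-variation error of order $N^{-1/(2p)}=o(N^{-1/d})$ (using $2p<d$). The $k=2$ term is handled directly by Proposition~\ref{coarse bound new} and Jensen's inequality, producing a contribution of at most $\kappa_p^{\mathrm{cube}}\eps^{1/p-1/d}N^{-1/d}$, which vanishes with $\eps$ since $p<d$.

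The main obstacle is the $k=1$ term, since $\mu_1$ still carries (essentially) full mass and only the smallness of $\lambda^d(U)$ is available. I would repeat the splitting at the finer scale of the $D_i$: setting $\beta_i:=\mu_1(D_i)$ and $k_i:=M_1\hat\mu^{(1)}_{M_1}(D_i)$, the measure $\nu_1:=\sum_i(k_i/M_1)\,\mu_1|_{D_i}/\beta_i$ matches $\hat\mu^{(1)}_{M_1}$ on each cube, so convexity together with Proposition~\ref{coarse bound new} scaled to each $D_i$ gives
\[
\E\bigl[\rho_p^p(\nu_1,\hat\mu^{(1)}_{M_1})\,\big|\,M_1\bigr]\le (\kappa_p^{\mathrm{cube}})^p M_1^{-p/d}\sum_{i=1}^L a_i^p\beta_i^{1-p/d}.
\]
The crucial step is H\"older's inequality with exponents $d/p$ and $d/(d-p)$, which yields
\[
\sum_i a_i^p\beta_i^{1-p/d}\le\Bigl(\sum_i a_i^d\Bigr)^{p/d}\Bigl(\sum_i\beta_i\Bigr)^{1-p/d}\le\eps^{p/d},
\]
a bound depending on the cover only through its total Lebesgue volume and not on $L$ or the individual $a_i$. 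The residual fluctuation $\rho_p(\mu_1,\nu_1)\le\cd\bigl(\tfrac12\sum_i|k_i/M_1-\beta_i|\bigr)^{1/p}$ is again $o(N^{-1/d})$ for fixed $\eps$. Assembling the pieces gives $\limsup_{N\to\infty} N^{1/d}V_{N,p}^{\mathrm{rand}}(\mu)\le\kappa_p^{\mathrm{cube}}\bigl(\eps^{1/d}+\eps^{1/p-1/d}\bigr)$, and letting $\eps\downarrow 0$ completes the proof.
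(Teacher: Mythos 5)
Your proposal is correct and follows essentially the same route as the paper: cover the singular support by cubes of total Lebesgue volume at most $\eps$, apply Proposition~\ref{coarse bound new} cube by cube (with the scaling property and Jensen's inequality), and use H\"older's inequality with exponents $d/p$ and $d/(d-p)$ to turn the small total volume into the factor $\eps^{p/d}$, with the multinomial/mass-matching fluctuations absorbed as $\cO(N^{-1/(2p)})=o(N^{-1/d})$. The only difference is organizational -- you split off the uncovered remainder via a multinomial two-stage decomposition where the paper treats it as an extra cube $B_0$ in a single $(B_k)$-approximation -- which changes nothing of substance.
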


\begin{proof}
Without loss of generality we will assume that $\mu$ is a probability measure. 
Let $\eps>0$ and choose an open set $U\subset \R^d$ such that $\mu(U)=1$ and $\lambda^d(U)<\eps$. 
We fix finitely many pairwise disjoint cubes $B_1,\dots,B_K$ with
$$
U\supset B_1\cup\dots \cup B_K \ \text{ and } \mu(B_1\cup\dots \cup B_K)\ge 1-\eps.
$$
We set $B_0=[0,1)^d\backslash (B_1\cup\dots \cup B_K)$ and define the probability measure $\nu$, as in Lemma \ref{le0301-1}, 
by $\nu:=\sum_{k=0}^{K}\nu|_{B_k}$ where
$$
\nu|_{B_k}=\frac{\hat\mu_N(B_k)}{\mu(B_k)} \mu|_{B_k}.
$$
Then the vector $Z:=(N\hat \mu_N(B_k))_{k=0,\dots,K}$ is multinomially distributed with parameters $N$ and   $(\mu(B_k))_{k=0,\dots,K}$. 
Hence, by Lemma~\ref{le0301-1}, 
\begin{align}\label{eq2912-1}
\E [\rho_p^p(\mu,\nu)]^{1/p} \le \Bigl( \frac1{2N} \mathfrak{d^p} \sum_{k=0}^K \E |Z_k- N\mu(B_k)| \Bigr)^{1/p}=\cO(N^{-1/2p}).
\end{align}
We denote by $a_1,\dots, a_K$ the edge lengths of the cubes $B_k$, i.e. $a_k=\lambda^d(B_k)^{1/d}$, and set $a_0=1$. 
Note that $\nu|_{B_k}$ and $\hat\mu_N|_{B_k}$ have the same mass for all $k$. We apply Lemma \ref{wasser_prop}, 
Proposition \ref{coarse bound new} and Jensen's inequality to deduce that
\begin{align*}
 \E [\rho_p^p(\nu,\hat\mu_N)] &\le \sum_{k=0}^K \E [\rho_p^p(\nu|_{B_k},\hat\mu_N|_{B_k})] \le  \frac{1}{N}(\kappa_p^{\mathrm{cube}})^p 
\sum_{k=0}^K a_k^p \, \E\left[\left(\hat \mu_N({B_k})\, N\right)^{1-p/d}\right]\\
&\le (\kappa_p^{\mathrm{cube}})^p\,N^{-p/d}  \sum_{k=0}^K a_k^p \, (\mu({B_k}))^{1-p/d}.
\end{align*}
Next, we apply H\"older's inequality with exponents $d/p$ and $(1-p/d)^{-1}$ to get
\begin{align*}
\E [\rho_p^p(\nu,\hat\mu_N)] &\le 
 (\kappa_p^{\mathrm{cube}})^p \left({\sum_{k=1}^{K} \lambda^d(B_k)}\right)^{p/d} \cdot 
\left({\sum_{k=1}^{K}\mu(B_k)}\right)^{1-p/d} N^{-p/d} \\
& \ \ \ \ \ + (\kappa_p^{\mathrm{cube}})^p \, \mu(B_0)^{1-p/d}  N^{-p/d} \\
& \leq (\kappa_p^{\mathrm{cube}})^p (\eps^{p/d} + \eps^{1-p/d}) \, N^{-p/d}.
\end{align*}
It follows from (\ref{eq2912-1}) and the triangle inequality  that
$$
\limsup_{N\to\infty} N^{1/d} \, \E [\rho_p^p(\mu,\hat\mu_N)]^{1/p} \le \kappa_p^\mathrm{cube} (\eps^{p/d}+\eps^{1-p/d})^{1/p}
$$which finishes the proof since $\eps>0$ is arbitrary.
\end{proof}

\begin{Theorem}\label{asymptotic}
Let $p\in[1,\frac d2)$ and let $\mu$ denote a probability measure on $\R^d$ with finite $q$th moment for some $q>\frac{dp}{d-p}$. 
If the absolutely continuous part $\mu_a$ of $\mu$ has density $f$ which is approachable from below, then
\begin{equation}
\limsup\limits_{N \rightarrow \infty}N^{1/d} \, V_{N,p}^{\mathrm{rand}}(\mu) \leq \kappa_p^{\mathrm{unif}} \,  
\left(\int_{\R^{d}} f(x)^{1-\frac{p}{d}}\,\dd x \right)^{1/p}.
\end{equation}
If the absolutely continuous part $\mu_a$ of $\mu$ has density $f$ which is approachable from above, then
\begin{equation}
\liminf\limits_{N \rightarrow \infty}N^{1/d} \, V_{N,p}^\mathrm{rand}(\mu) \geq \underline{\kappa}_p^{\mathrm{unif}} \,  
\left(\int_{\R^{d}} f(x)^{1-\frac{p}{d}}\,\dd x \right)^{1/p}.
\end{equation}
\end{Theorem}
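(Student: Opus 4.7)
For the \emph{upper bound}, my plan is to truncate to a large cube $B_R:=[-R,R]^d$ and reduce the interior analysis to Proposition \ref{prop2912-1}. Decompose $\mu$ into three pieces $\mu^a:=\mu_a|_{B_R}$, $\mu^s:=\mu_s|_{B_R}$ and $\mu^t:=\mu|_{B_R^c}$ with masses $\alpha_a,\alpha_s,\alpha_t$; conditional on the trinomial counts $(N_a,N_s,N_t)$ the empirical measure splits as $\tfrac{N_a}{N}\hat\mu^a_{N_a}+\tfrac{N_s}{N}\hat\mu^s_{N_s}+\tfrac{N_t}{N}\hat\mu^t_{N_t}$ with independent pieces. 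Introduce the deterministic reweighted proxy $\nu:=\tfrac{N_a}{N\alpha_a}\mu^a+\tfrac{N_s}{N\alpha_s}\mu^s+\tfrac{N_t}{N\alpha_t}\mu^t$; the Hellinger-type variance estimate from \eqref{ieq1612-1} gives $\E[\rho_p^p(\mu,\nu)]=\cO(N^{-1/2})$. Convexity (Lemma \ref{wasser_prop}(i)) then splits $\rho_p^p(\nu,\hat\mu_N)$ into three matched-mass block contributions that I treat separately: the AC block by Proposition \ref{prop2912-1} applied to $\mu^a/\alpha_a$ (its density inherits approachability from below for a generic $R$), the singular block by Proposition \ref{prop2912-2} (after rescaling $B_R$ to the unit cube) at rate $o(N^{-p/d})$, and the tail block by Theorem \ref{Pierce} applied to $\mu^t/\alpha_t$ combined with Jensen's inequality, producing a contribution of order $\bigl(\int_{B_R^c}\|x\|^q\dd\mu\bigr)^{p/q}N^{-p/d}$. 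Sending $N\to\infty$ then $R\to\infty$ yields the claimed limsup: the tail vanishes by the $q$th moment hypothesis, the singular piece disappears, and $\int_{B_R}f^{1-p/d}\dd\lambda^d\uparrow\int_{\R^d}f^{1-p/d}\dd\lambda^d$.

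For the \emph{lower bound} I will adapt the block-wise super-additivity scheme from the proof of the second statement of Theorem \ref{thm2}(i). Fix $\eps>0$ and use approachability from above to obtain pairwise disjoint cubes $B_1,\dots,B_K$ with side lengths $a_k:=\lambda^d(B_k)^{1/d}$ and weights $\alpha_k$ with $\nu:=\sum_k\alpha_k\cU(B_k)\ge\mu_a$ and $\|\nu-\mu_a\|\le\eps$. By refining the partition I may assume further that $f$ is essentially constant equal to $c_k:=\alpha_k/\lambda^d(B_k)$ throughout $B_k$, so that $\mu|_{B_k}/\mu(B_k)$ is close to $\cU(B_k)$ in total variation. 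Couple the i.i.d.\ sample $X\sim\mu^{\otimes N}$ with a modified vector $Y$ whose $B_k$-subvector consists of exactly $M_k:=\lfloor N\mu(B_k)\rfloor$ i.i.d.\ $\cU(B_k)$-distributed entries, via two applications of Lemma \ref{divide} (first to align block counts, then to replace the within-block law by $\cU(B_k)$); the expected mismatch is $\cO(\sqrt N)$, contributing only $o(N^{-1/d})$ to $\rho_p$. For any transport $\xi\in\cM(\mu,\hat\mu_N^Y)$, the restricted source $\zeta_k:=\xi(\cdot\times B_k)$ satisfies $\zeta_k|_{\mathrm{int}(B_k)}\le\mu|_{\mathrm{int}(B_k)}\le c_k\cU|_{\mathrm{int}(B_k)}$ (after refinement); rescaling $B_k$ to $[0,1]^d$ via Lemma \ref{wasser_prop}(iii), the measure $c_k^{-1}\zeta_k$ becomes an admissible competitor in the $\underline{\rho}_p$-problem and the cost of $\xi$ restricted to $\R^d\times B_k$ dominates $c_k\,a_k^p\,\underline{\rho}_p^p\bigl(\cU(B_k),c_k^{-1}\hat\mu_N^Y|_{B_k}\bigr)$. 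Summing over $k$, taking expectation, and invoking the asymptotic $\underline{V}^{\mathrm{rand}}_{M,p}\sim\underline{\kappa}_p^{\mathrm{unif}}M^{-1/d}$ of Theorem \ref{thm2}(i) together with $M_k\sim N\alpha_k$ yields
$$
\liminf_{N\to\infty}N^{p/d}\,\E[\rho_p^p(\mu,\hat\mu_N)]\ge(\underline{\kappa}_p^{\mathrm{unif}})^p\sum_{k=1}^K a_k^p\alpha_k^{1-p/d}=(\underline{\kappa}_p^{\mathrm{unif}})^p\int_{\R^d} g^{1-p/d}\dd\lambda^d,
$$
with $g:=\sum_k c_k\ind_{B_k}$. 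Letting $\eps\to0$ (so $g\downarrow f$ and monotone convergence applies) completes the argument.

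The \emph{main obstacle} is the super-additivity step in the lower bound: one must carefully justify that restricting a global transport to $\R^d\times B_k$ produces a legitimate competitor in the $\underline{\rho}_p$-problem on $B_k$, interpreting the boundary mass of $\zeta_k$ (arising from mass transported between blocks) as the leakage tolerated by $\underline{\rho}_p$. A secondary technical difficulty is the partition refinement itself: approachability from above gives only $f\le c_k$ and does not by itself force $f\equiv c_k$ on each $B_k$, so one needs to quantify the error from $f$ being only approximately constant when coupling $\mu|_{B_k}/\mu(B_k)$ to $\cU(B_k)$ and when invoking the uniform-cube asymptotic. Once these geometric points are settled the remaining limsup/liminf manipulations are routine.
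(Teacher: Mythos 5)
Your upper-bound argument is, up to replacing the ball $B(0,\delta)$ by a cube $B_R$, exactly the paper's proof of Theorem~\ref{asymptotic}: the same three-way split into $\mu_a|_{B_R}$, $\mu_s|_{B_R}$, $\mu|_{B_R^c}$, the same multinomial representation of $\hat\mu_N$ and reweighted proxy $\nu$ with $\E[\rho_p^p(\mu,\nu)]^{1/p}=\cO(N^{-1/2p})$, the same use of Proposition~\ref{prop2912-1}, Proposition~\ref{prop2912-2} and Theorem~\ref{Pierce} (with Jensen and $1-\tfrac pd-\tfrac pq\ge 0$) on the three blocks, and the same order of limits $N\to\infty$, then $R\to\infty$. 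That half is fine.

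The lower bound is where the paper is silent (it says only ``proved analogously, first establishing a corresponding version of Proposition~\ref{prop2912-1}''), and your super-additivity-with-leakage scheme is indeed the right template (it mirrors the proof of the second statement of Theorem~\ref{thm2}(i)). However, the step you dismiss as a ``secondary technical difficulty'' is actually where your argument breaks as written. For the restricted source $\zeta_k=\xi(\cdot\times B_k)$ to be an admissible competitor in the $\underline\rho_p$-problem on the rescaled cube, you must normalise it to have the same mass as the target $\hat\mu_N^Y|_{B_k}$, i.e.\ multiply by $N/M_k$, and then the interior domination you need is $(N/M_k)\,\zeta_k|_{\mathrm{int}B_k}\le (N/M_k)\,c_k\,\lambda^d|_{\mathrm{int}B_k}=(N\alpha_k/M_k)\cdot\alpha_k^{-1}c_k\lambda^d|_{B_k}$, which lies in $\Lambda$ only if $M_k\ge N\alpha_k$. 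But approachability from above gives $f\le c_k$ on $B_k$, hence $\mu_a(B_k)\le\alpha_k$ and $M_k\approx N\mu(B_k)\le N\alpha_k$ whenever $\mu_s(B_k)=0$ and $f<c_k$ on a set of positive measure --- the inequality goes the wrong way, and no refinement of the partition removes it, since $c_k$ always dominates the average of $f$ over $B_k$. So the displayed super-additivity inequality is not established. To close this you need a genuine additional argument: either a quantitative comparison of the leakage problem under a source dominated by $(1+\eta_k)\cU$ with $\eta_k=\alpha_k/\mu(B_k)-1$, together with control of $\sum_k$ of the resulting errors (note $\sum_k(\alpha_k-\mu_a(B_k))\le\eps$ only controls the errors in aggregate, not blockwise), or the route the paper hints at: first pass from $\mu$ to the dominating step measure $\sum_k\alpha_k\cU(B_k)$ (suitably renormalised) and control the discrepancy between the two empirical measures by the Pierce estimate, which is delicate for $p>1$ precisely because Lemma~\ref{Steffen} (cancellation of common mass) is special to $p=1$. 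As it stands, the lower half of your proof has a real gap at its central inequality.
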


\begin{proof} We only prove the first statement since the second one is proved analogously (first establishing a corresponding 
version of Proposition~\ref{prop2912-1}). Let $\delta>0$ and set 
$$\mu^{(1)}=\frac{ \mu_a\big|_{B(0,\delta)}}{\mu_a(B(0,\delta))},  \ \mu^{(2)}= 
\frac{\mu_s\big|_{B(0,\delta)}}{\mu_s(B(0,\delta))} , \text{ and }  \mu^{(3)}= \frac{\mu\big|_{B(0,\delta)^c}}{\mu(B(0,\delta)^c)},
$$
where we let $\mu^{(i)}$ be an arbitrary probability measure in case the denominator is zero. 
As in the proof of Proposition~\ref{prop2912-1}, we represent $\hat \mu_N$ with the help of independent sequences of 
empirical measures 
$(\hat\mu^{(1)}_n)_{n\iN_0}, \dots, (\hat\mu^{(3)}_n)_{n\iN_0}$ and an independent  multinomially distributed random variable 
$M=(M_k)_{k=1,2,3}$ with parameters $N$ and $(\mu_a(B(0,\delta)),\mu_s(B(0,\delta)),\mu(B(0,\delta)^c))$ as
$$
N\hat\mu_N= \sum_{k=1}^3 M_k\, \hat\mu_{M_k}^{(k)}.
$$
As before one observes that for the random measure $\nu=\sum_{k=1}^3 \frac{M_k}{N} \mu^{(k)}$
$$
\E[\rho_p^p (\mu,\nu) ]^{1/p}=\mathcal{O}(N^{-1/2}).
$$
Further, by Lemma \ref{wasser_prop},
$$
N \,\E[\rho_p^p(\nu,\hat \mu_N)] \le  \sum_{k=1}^3 \E[ M_k\, \rho_p^p(\mu^{(k)}, \hat \mu^{(k)}_{M_k})]
$$
and,  by Propositions~\ref{prop2912-1} and \ref{prop2912-2}, there exist concave functions $\vphi_1$ and $\vphi_2$ with
$$
n \,V_{n,p}^\mathrm{rand}(\mu^{(k)})^p \le \vphi_k(n), \qquad \text{ for }n\in\N,\quad k=1,2
$$
and
$$
\vphi_1(n)\sim (\kappa_p^\mathrm{unif})^p \, n^{1-p/d} \int_{B(0,\delta)} \frac{f(x)^{1-p/d}}{ \mu_a(B(0,\delta))^{1-p/d} } \,\dd x  \ 
\text{ and } \ \vphi_2(n)=\mathrm{o}(n^{1-p/d})
$$
as $n\to\infty$. By Jensen's inequality,  $\E[ M_k\, \rho_p^p(\mu^{(k)}, \hat \mu^{(k)}_{M_k})]\le \vphi_k(\E[M_k])$ so that
$$
\limsup_{N\to\infty} \frac1{N^{1-p/d}}\, \E[M_1\, \rho_p^p(\mu^{(1)}, \hat \mu^{(1)}_{M_1})]\le  
(\kappa_p^\mathrm{unif})^p  \int_{B(0,\delta)} {f(x)^{1-p/d}} \,\dd x.
$$
Analogously, using Proposition~\ref{prop2912-2},
$$
\limsup_{N\to\infty} \frac1{N^{1-p/d}}\, \E[M_2 \,\rho_p^p(\mu^{(2)}, \hat \mu^{(2)}_{M_2})]=0
$$
and, by Theorem~\ref{Pierce},
$$
\limsup_{N\to\infty} \frac1{N^{1-p/d}}\, \E[M_3\, \rho_p^p(\mu^{(3)}, \hat \mu^{(3)}_{M_3})] \le 
(\kappa_{p,q}^\mathrm{Pierce})^p \Bigl[\int_{B(0,\delta)^c} \|x\|_\mathrm{max}^q\,\dd \mu(x)\Bigr]^{p/q} ,
$$
where we used that $1-\frac pd-\frac pq\geq 0$.
Altogether, we get
\begin{align*}
\limsup_{N\to\infty} &{N^{p/d}}\, \E[ \rho_p^p(\mu,\hat\mu_N)]\\
&\le   (\kappa_p^\mathrm{unif})^p  \int_{B(0,\delta)} {f(x)^{1-p/d}} \,\dd x+ (\kappa_{p,q}^\mathrm{Pierce})^p 
\Bigl[\int_{B(0,\delta)^c} \|x\|_\mathrm{max}^q\,\dd \mu(x)\Bigr]^{p/q}
\end{align*}
and letting $\delta\to\infty$ finishes the proof. 
\end{proof}

\subsection{Proof of the high resolution formula for  $p=1$}

In this section, we consider the special case $p=1$. We will write $\rho$ instead of $\rho_1$. The case $p=1$ is special because of the 
following lemma. 
\begin{Lemma}\label{Steffen}
Let $\mu,\,\nu,\,\kappa$ be finite measures on  $\R^d$ such that $\|\mu\|=\|\nu\|$. Then one has
$$
\rho(\mu+\kappa,\nu+\kappa)=\rho(\mu,\nu).
$$
\end{Lemma}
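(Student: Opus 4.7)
The plan is to prove both inequalities in $\rho(\mu+\kappa,\nu+\kappa)=\rho(\mu,\nu)$. The direction $\rho(\mu+\kappa,\nu+\kappa)\le\rho(\mu,\nu)$ is straightforward: for any $\xi\in\cM(\mu,\nu)$, set $\tilde\xi:=\xi+\kappa\circ\psi^{-1}$, where $\psi:\R^d\to\R^d\times\R^d$, $x\mapsto(x,x)$. Then $\tilde\xi\in\cM(\mu+\kappa,\nu+\kappa)$, since adjoining the diagonal measure $\kappa\circ\psi^{-1}$ adds $\kappa$ to each marginal, and the cost of $\tilde\xi$ equals that of $\xi$ because the extra mass sits on the diagonal where $\|x-y\|=0$. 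Taking the infimum over $\xi$ yields the inequality.

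For the reverse direction $\rho(\mu,\nu)\le\rho(\mu+\kappa,\nu+\kappa)$, I would invoke the Kantorovich--Rubinstein duality for the $1$-Wasserstein metric: for any two finite measures $\alpha,\beta$ on $\R^d$ of equal total mass with finite first moments,
$$
\rho(\alpha,\beta)=\sup\Bigl\{\int f\,\dd\alpha-\int f\,\dd\beta : f:\R^d\to\R \text{ 1-Lipschitz}\Bigr\}.
$$
The crucial observation is that for every $1$-Lipschitz $f$ one has
$$
\int f\,\dd(\mu+\kappa)-\int f\,\dd(\nu+\kappa)=\int f\,\dd\mu-\int f\,\dd\nu,
$$
so the two suprema coincide, giving $\rho(\mu+\kappa,\nu+\kappa)=\rho(\mu,\nu)$. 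Degenerate cases are easy: if some first moment of $\mu$ or $\nu$ is infinite, then $\rho(\mu,\nu)=\infty$, and since $\kappa$ is finite, the corresponding first moment of $\mu+\kappa$ or $\nu+\kappa$ is also infinite, so $\rho(\mu+\kappa,\nu+\kappa)=\infty$ as well and the equality holds trivially.

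The main obstacle, if any, is the reverse inequality; once KR duality is accepted as a classical tool it reduces to a one-line observation about signed measures. An alternative self-contained route would disintegrate a near-optimal $\eta\in\cM(\mu+\kappa,\nu+\kappa)$ according to a splitting of the source into $\mu$ and $\kappa$ parts, combine the resulting $\mu\to\kappa$ and $\kappa\to\nu$ pieces through the gluing lemma to produce an additional $\mu\to\nu$ transport, and bound its cost via the triangle inequality for $\|\cdot\|$; this is more delicate and unnecessary given the duality, which seems to be the cleanest approach here.
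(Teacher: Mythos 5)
Your proof is correct and takes essentially the same route as the paper, which simply applies the Kantorovich--Rubinstein duality to both $\rho(\mu+\kappa,\nu+\kappa)$ and $\rho(\mu,\nu)$ and observes that the $\int f\,\dd\kappa$ terms cancel. Your separate coupling argument for the easy direction (adding the diagonal measure $\kappa\circ\psi^{-1}$) is sound but redundant once duality is invoked.
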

\begin{proof}
One has
\begin{align*}
\rho(\mu+\kappa,\nu+\kappa)&=\sup\{\int f \,\dd (\mu+\kappa) - \int f \,\dd (\nu+\kappa): f\;\mbox{1-Lipschitz}\}\\
&=\sup\{\int f \,\dd \mu - \int f \,\dd \nu: f\;\mbox{1-Lipschitz}\}=\rho(\mu,\nu).
\end{align*}
\end{proof}

The following lemma shows that the map $\mu \mapsto \limsup_{N\to\infty} \big( N^{1/d} V_{N,1}^\mathrm{rand}(\mu)\big)$ and likewise $\mu \mapsto \liminf_{N\to\infty} \big( N^{1/d} V_{N,1}^\mathrm{rand}(\mu)\big)$ are continuous with 
respect to the total variation norm.

\begin{Lemma}\label{total}
Let $d\ge 3$ and $q>\frac d{d-1}$. For probability measures $\mu$ and $\nu$ on $\R^d$ one has
$$
\limsup_{N\to\infty} N^{\frac1d} |V_{N,1}^\mathrm{rand}(\mu)-V_{N,1}^\mathrm{rand}(\nu)| \le 2 \kappa_{1,q}^\mathrm{Pierce} \|\mu-\nu\| ^{1-\frac1d-\frac 1q}
\Bigl(\int \|x\|^q_\mathrm{max}\,|\mu-\nu|(\dd x)\Bigr)^{\frac 1q}.
$$
\end{Lemma}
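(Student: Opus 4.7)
The plan is to couple the empirical measures $\hat\mu_N$ and $\hat\nu_N$ using the Jordan decomposition of $\mu-\nu$ and then to invoke the Pierce estimate of Theorem~\ref{thm_pierce} on the normalized positive and negative parts. Set $\kappa := \mu\wedge\nu$, $\alpha := (\mu-\nu)_+$, $\beta := (\nu-\mu)_+$, and $\lambda := \|\alpha\|=\|\beta\| = \tfrac12\|\mu-\nu\|$; assume $\lambda>0$ (otherwise the bound is trivial) and denote the corresponding probability measures by $\bar\kappa$, $\bar\alpha$, $\bar\beta$. Draw $M \sim \mathrm{Bin}(N,\lambda)$ and, conditionally on $M$, take independent iid samples $K_1,\dots,K_{N-M}$ from $\bar\kappa$, $A_1,\dots,A_M$ from $\bar\alpha$, and $B_1,\dots,B_M$ from $\bar\beta$. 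Setting $\hat\mu_N := \frac1N\bigl(\sum_i\delta_{K_i}+\sum_j\delta_{A_j}\bigr)$ and $\hat\nu_N := \frac1N\bigl(\sum_i\delta_{K_i}+\sum_j\delta_{B_j}\bigr)$ gives the correct marginal laws. By the Kantorovich--Rubinstein duality, for every $1$-Lipschitz $f$,
\begin{equation*}
\int f\,\dd(\mu-\hat\mu_N) - \int f\,\dd(\nu-\hat\nu_N) = \int f\,\dd\gamma, \qquad \gamma := (\mu-\nu)-(\hat\mu_N-\hat\nu_N),
\end{equation*}
so $|\rho(\mu,\hat\mu_N)-\rho(\nu,\hat\nu_N)| \le \sup_f\int f\,\dd\gamma \le \rho(P,Q)$ for any equal-mass nonnegative decomposition $\gamma = P-Q$. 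The natural choice is $P := \alpha+\hat\beta_{M,N}$ and $Q := \beta+\hat\alpha_{M,N}$, with $\hat\alpha_{M,N} := \frac1N\sum_j\delta_{A_j}$ and $\hat\beta_{M,N} := \frac1N\sum_j\delta_{B_j}$.

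The key estimate is to bound $\rho(P,Q)$ by inserting the intermediate measures $\alpha+(M/N)\bar\beta$ and $\beta+(M/N)\bar\alpha$ and applying the triangle inequality three times. Lemma~\ref{Steffen} cancels the common summands ($\alpha$ in the first step, $\min(\lambda, M/N)(\bar\alpha+\bar\beta)$ in the middle, $\beta$ in the third step), and the mass-linearity of $\rho=\rho_1$ then yields
\begin{equation*}
\rho(P,Q) \le \frac{M}{N}\rho(\bar\alpha,\tilde\alpha_M) + \frac{M}{N}\rho(\bar\beta,\tilde\beta_M) + \Bigl|\lambda - \frac{M}{N}\Bigr|\rho(\bar\alpha,\bar\beta),
\end{equation*}
where $\tilde\alpha_M := \frac1M\sum_j\delta_{A_j}$ and $\tilde\beta_M := \frac1M\sum_j\delta_{B_j}$ are the empirical measures of the $M$ iid samples from $\bar\alpha$ and $\bar\beta$.

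Finally, I would take expectations and apply Theorem~\ref{thm_pierce} to $\bar\alpha$ and $\bar\beta$, whose $q$-th moments are finite by the assumption on $\int\|x\|_{\mathrm{max}}^q\,\dd|\mu-\nu|$. Conditioning on $M=m$ gives $\E[\rho(\bar\alpha,\tilde\alpha_m)] \le \kappa_{1,q}^{\mathrm{Pierce}}\bigl(\int\|x\|_{\mathrm{max}}^q\,\dd\bar\alpha\bigr)^{1/q}m^{-1/d}$, and Jensen's inequality on $x\mapsto x^{1-1/d}$ yields $\E[M^{1-1/d}] \le (\lambda N)^{1-1/d}$. Rewriting $\bigl(\int\|x\|_{\mathrm{max}}^q\,\dd\bar\alpha\bigr)^{1/q} = \lambda^{-1/q}\bigl(\int\|x\|_{\mathrm{max}}^q\,\dd\alpha\bigr)^{1/q} \le \lambda^{-1/q}\bigl(\int\|x\|_{\mathrm{max}}^q\,\dd|\mu-\nu|\bigr)^{1/q}$ and proceeding symmetrically for $\bar\beta$, the first two terms contribute at most $2\kappa_{1,q}^{\mathrm{Pierce}}\lambda^{1-1/d-1/q}\bigl(\int\|x\|_{\mathrm{max}}^q\,\dd|\mu-\nu|\bigr)^{1/q}N^{-1/d}$. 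The third term is $\cO(N^{-1/2})$ since $\E|\lambda-M/N| \le \sqrt{\lambda/N}$ and $\rho(\bar\alpha,\bar\beta)$ is finite, hence $o(N^{-1/d})$ once $d\ge 3$. Multiplying by $N^{1/d}$, taking $\limsup$, and using $\lambda \le \|\mu-\nu\|$ together with positivity of the exponent $1-\tfrac1d-\tfrac1q$ (valid since $q>d/(d-1)$) yields the stated bound. The main obstacle is the discrepancy between the deterministic proportion $\lambda$ and the random binomial ratio $M/N$: a direct transport between the positive and negative parts of $\gamma$ would leave a persistent error $|\lambda-M/N|\rho(\bar\alpha,\bar\beta)$; the intermediate-measure step isolates this as a $\cO(N^{-1/2})$ fluctuation, which is negligible against $N^{-1/d}$ precisely because $d\ge 3$.
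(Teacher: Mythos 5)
Your argument is correct and follows essentially the same route as the paper: the same coupling of $\hat\mu_N$ and $\hat\nu_N$ through the decomposition $\mu=\mu\wedge\nu+(\mu-\nu)_+$ with a shared binomial count and shared samples of the common part, the same cancellation of common summands via Kantorovich--Rubinstein duality (the content of Lemma~\ref{Steffen}), the Pierce estimate applied to the normalized parts of $|\mu-\nu|$, and the $\cO(N^{-1/2})$ control of the binomial fluctuation. The only difference is cosmetic -- you package the comparison as a single transport bound $|\rho(\mu,\hat\mu_N)-\rho(\nu,\hat\nu_N)|\le\rho(P,Q)$ via duality, whereas the paper reaches the same terms through a chain of triangle inequalities.
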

\begin{proof}
Without loss of generality, we assume that $\mu \neq \nu$.  
Let $\alpha= \frac {\mu\wedge \nu}{\|\mu\wedge \nu\|} $, $\mu^{*}=\frac{\mu- \mu\wedge \nu }{\|\mu- \mu\wedge \nu\|}$ and 
$\nu^*=\frac{\nu- \mu\wedge \nu }{\|\nu- \mu\wedge \nu\|}$ (let $\alpha$ be an arbitrary probability measure in case $\mu \wedge \nu =0$).
For fixed $N\in\N$ let $(M_1,M_2)$ be multinomially distributed with parameters $N$ and $(\|\mu\wedge \nu\|,1-\|\mu\wedge \nu\| )$.
We represent $\hat \mu_N$ and $\hat \nu_N$ as combinations of independent empirical measures $(\hat \alpha_n)$, $(\hat\mu^*_n)$ and 
$(\hat \nu^*_n)$ as
$$
N \,\hat\mu_N=M_1 \hat\alpha_{M_1}+M_2 \hat\mu^*_{M_2} \ \text{ and } \ N \,\hat\nu_N=M_1 \hat\alpha_{M_1}+M_2 \hat\nu^*_{M_2}.
$$
Then
\begin{align}\label{absch}
\begin{split}
\rho(N\mu, N \hat \mu_N) &\le \rho(N \mu, M_1 \alpha + M_2 \mu^*) +\rho(   M_1 \alpha + M_2 \mu^*, M_1 \hat \alpha_{M_1}+ M_2 \hat \mu^*_{M_2})\\
&\le  \rho(N \mu, M_1 \alpha + M_2 \mu^*) + \rho(   M_1 \alpha , M_1 \hat \alpha_{M_1})+\rho(  M_2 \mu^*,  M_2 \hat \mu^*_{M_2}).
\end{split}
\end{align}
Observe that 
\begin{equation}\label{wurzel}
\E[ \rho(N \mu, M_1 \alpha + M_2 \mu^*)]   = \cO(N^{1/2}).
\end{equation}
Further, by Theorem~\ref{Pierce} and Jensen's inequality, one has 
\begin{equation}\label{pierce2}
\E[\rho(  M_2 \mu^*,  M_2 \hat \mu^*_{M_2})] \le  \kappa_{1,q}^{\mathrm{Pierce}} \|\mu-\nu\| ^{1-\frac1d-\frac 1q} 
N^{1-\frac 1d}\Bigl( \int \|x\|_\mathrm{max}^q\,(\mu-\nu)_+(\dd x)\Bigr)^{\frac 1q} + \cO(N^{\frac 12}),
\end{equation}
where we used that $(\mu-\nu)_+=\|\mu-\nu\|\,\mu^*$.
Conversely, by Lemma \ref{Steffen} and Lemma~\ref{wasser_prop},
\begin{align*}
\rho(M_1 \alpha, M_1 \hat \alpha_{M_1})  &= \rho(M_1 \alpha + M_2 \hat \nu^*_{M_2}, M_1 \hat \alpha_{M_1}M_2 \hat \nu^*_{M_2})\\
 &= \rho(M_1 \alpha + M_2 \hat \nu^*_{M_2},N\hat \nu_N)\\
 &\le \rho(N\nu,N\hat\nu_N)+\rho(M_1 \alpha + M_2 \hat \nu^*_{M_2},N\nu)\\
 &=  \rho(N\nu,N\hat\nu_N)+\rho(M_1 \alpha + M_2 \hat \nu^*_{M_2}+M_2 \nu^*,N\nu +M_2 \nu^*)\\
 &\le \rho(N\nu,N\hat\nu_N)+\rho(M_2 \hat \nu^*_{M_2},M_2 \nu^*)+\rho(M_1 \alpha +M_2 \nu^*,N\nu).
\end{align*}
The expected values of the last two summands can be estimated like \eqref{pierce2} and \eqref{wurzel}. 
Inserting the estimates into \eqref{absch}, the assertion of 
the lemma follows. 
\end{proof}

We now prove the general upper and lower bounds in the case $p=1$.\\ 

\begin{proof}[ of Theorem \ref{thm2} (ii) for $p=1$]
Let $\mu=\mu_a+\mu_s$ be the Lebesgue decomposition of $\mu$ and let $f$ denote the density of $\mu_a$. 
It is now straightforward to verify that $\mu^{(n)}$ with density
$$
f^{(n)}(x) = 2^{-nd} \int_{S_{n,m_1,...,m_d}} f(y)\,\dd y \ \text{ for } x \in  S_{n,m_1,...,m_d},
$$
where $S_{n,m_1,...,m_d}:=2^{-n} ([m_1,m_1+1)\times \dots \times [m_d,m_d+1))$, 
satisfies $\|\mu_a- \mu^{(n)}\|\to 0$ and $ \int \|x\|_\mathrm{max}^q\,|\mu_a-\mu^{(n)}|(\dd x) \to 0$. Since $\mu^{(n)}+\mu_s$ is approachable from below and above,  
 Lemma \ref{total} allows to extend the upper and lower bounds of Theorem~\ref{asymptotic} to the case with general density if $p=1$. 
\end{proof}

\noindent{\bf Acknowledgement.} Reik Schottstedt acknowledges support from  DFG Grant SPP-1324 DE 1423/3-1.


\end{document}